\documentclass{amsart}

\usepackage{amssymb}
\usepackage{amsmath}
\usepackage{tikz-cd}

\newtheorem{theorem}{Theorem}[section]

\newtheorem{definition}[theorem]{Definition}
\newtheorem{example}[theorem]{Example}

\newtheorem{lemma}[theorem]{Lemma}

\newtheorem{proposition}[theorem]{Proposition}

\begin{document}

\title[A classifying localic category for locally compact locales]{A classifying localic category for locally compact locales}
\author{Dr Christopher Townsend}
\maketitle
\begin{abstract}
For an internal category $\mathbb{C}$ in a cartesian category $\mathcal{C}$ we define, naturally in objects $X$ of $\mathcal{C}$, $Prin_{\mathbb{C}}(X)$. This is a category whose objects are principal $c \mathbb{C}$-bundles over $X$ and whose morphisms are principal $c(\mathbb{C}^{\uparrow})$-bundles. Here $c(\_)$ denotes taking the core groupoid of a category (same objects but only isomorphisms as morphisms) and $\mathbb{C}^{\uparrow}$ is the arrow category of $\mathbb{C}$ (objects are morphisms, morphisms are commuting squares). We show that $X \mapsto Prin_{\mathbb{C}}(X)$ is a stack of categories and call stacks of this sort lax-geometric. We then provide two sufficient conditions for a stack to be lax-geometric and use them to prove that the pseudo-functor $X \mapsto \mathbf{LK}_{Sh(X)}$ on the category of locales $\mathbf{Loc}$ is a lax-geometric stack. Here $\mathbf{LK}_{Sh(X)}$ is the category of locally compact locales in the topos of sheaves over $X$, $Sh(X)$. Therefore there exists a localic category $\mathbb{C}_{\mathfrak{LK}}$ such that $\mathbf{LK}_{Sh(X)} \simeq Prin_{\mathbb{C}_{\mathfrak{LK}}}(X)$ naturally for every locale $X$. 

{\it Keywords}: Topos, locale, principal bundle, internal category and groupoid, category theory, geometric logic, stacks. 

{\it 2020 Mathematics Subject Classification (MSC2020)}: 06D22, 18F70, 18D40, 14A20.
\end{abstract}

\section{Introduction}

Some recent work (\cite{HenryTow} and \cite{HenryTowClass}) has shown that there is a localic groupoid that classifies compact Hausdorff locales. That is, there is a localic groupoid $\mathbb{G}_{\mathfrak{KH}}$ such that for any locale $X$, the category of compact Hausdorff locales over $Sh(X)$ is equivalent 
to a category of principal $\mathbb{G}_{\mathfrak{KH}}$-bundles over $X$. This result was offered as a compact Hausdorff dual to the existence of a classifying localic groupoid for discrete locales, something that is effectively well known by applying \cite{JoyT} to the object classifier and recalling \cite{BDesc} (which shows that the topos of $\mathbb{G}$-equivariant sheaves, $B(\mathbb{G})$, classifies principal $\mathbb{G}$-bundles). 

Now recall Blass' theorem (\cite{blass}) which states that an elementary topos $\mathcal{S}$ has a natural numbers object (i.e. satisfies the Axiom of Infinity) if and only if it has an object classifier. Blass' theorem can be restated for our context: there is a natural numbers object in $\mathcal{S}$ iff $\mathbf{Loc}_{\mathcal{S}}$, i.e. locales in $\mathcal{S}$, has a classifying groupoid for discrete objects. A question to consider then is whether the existence of $\mathbb{G}_{\mathfrak{KH}}$ is somehow a compact Hausdorff dual to the ordinary Axiom of Infinity? Perhaps we should try to understand it in the context of axiomatic approaches to locale theory (e.g. \cite{TowIdemp})? This may or may not lead to anything, but clearly it would be nicer to have a single axiom rather than two order dual ones. 

In this paper we meet that challenge and prove as our main result (Theorem \ref{main}) that there exists a classifying localic category for the stack of locally compact locales. The classes of compact Hausdorff and discrete locales are both locally compact, and it is easy to see from the main result that they too have classifying localic categories.

To prove our main result the key technical hurdle we need to overcome is that morphisms between principal bundles are always isomorphisms. So if we are hoping to classify stacks of categories as principal bundles, we are going to have change the usual definition of morphism. For both the case of discrete locales and the case of compact Hausdorff locales this hurdle can be overcome by replacing morphism with $\mathbb{S}$-homotopy, where $\mathbb{S}$ is the Sierpi\'{n}ski locale (see \cite{HenryTowClass}). But the order on the Sierpi\'{n}ski locale is reversed between the two cases, so we cannot expect this approach to extend to locally compact locales (and indeed Corollary 4.3 of \cite{TowHarv} rules this out). Section \ref{prin_c} below proposes a way to overcome this hurdle for locally compact locales, using a technique loosely related to one that has been explored in algebraic geometry; see \emph{c-stacks} in \cite{Drin}. This change in definition is available because we choose to generalise the underlying data from a localic groupoid, $\mathbb{G}$, to a localic category, $\mathbb{C}$. The objects are still principal bundles, but now of the core of $\mathbb{C}$, and the morphisms are certain natural transformations which are, up to isomorphism, principal bundles on the core of the arrow category of $\mathbb{C}$ (see also \cite{MoerGpdII} where, similarly, generality at the level of objects is ignored, but used for morphisms). The analysis works relative to any cartesian category and we offer a simple characterization of what we are calling \emph{lax-geometric} stacks; this is the first key proposition below (Proposition \ref{lax-conditions}). 

The remainder of the work needed for the main result is very similar to \cite{HenryTowClass}, but now with locally compact locales rather than compact Hausdorff locales. We first check that locally compact locales descend and so form a stack; this appears to be a new result for locale theory, but is quite straightforward. The next step is to use an information system type representation for continuous frames (i.e. for the frames of opens of locally compact locales). These are geometric theories, so stable under inverse images, and every continuous frame arises as the rounded ideals of an information system. Putting these observations together, the main result is then really just a repetition of \cite{HenryTowClass} (which itself is effectively a repetition of known techniques).

\section{Technical background}

\subsection{Principal bundles}
We will be investigating principal bundles internal to a cartesian (i.e. finitely complete) category $\mathcal{C}$. If $\mathbb{G} = (d_0,d_1:G_1 \rightrightarrows G_0,m,s,i)$ is a groupoid internal to $\mathcal{C}$ then a $\mathbb{G}$-object $P = (x:P \rightarrow G_0,\alpha:G_1 \times_{G_0} P \rightarrow P)$ is a  principal $\mathbb{G}$-bundle over $X$ (say via $p:P \rightarrow X$) if $(\alpha,\pi_2): G_1 \times_{G_0} P \rightarrow P \times_X P $ is an isomorphism and $p$ is an effective descent morphism (i.e. pullback $p^*: \mathcal{C}/X \rightarrow \mathcal{C}/P$ is monadic). In other words, the usual definition, but with effective descent in place of open surjection. It can be verified that even in this general context principal $\mathbb{G}$-bundles define cocycles and vice versa; we write $\psi^P:\mathbb{P} \rightarrow \mathbb{G}$ for a cocycle corresponding to $P$. This $\psi^P$ is an internal functor, so its data is a pair $(\psi^P,x_P:P \rightarrow G_0)$; i.e. we will tend not to notate its action on objects. Also $\mathbb{P}$ is the groupoid $\pi_1,\pi_2: P \times_X P \rightrightarrows P$ which we may write $\mathbb{P}_p$ if we need to emphasise which map back to $X$ is relevant (our general convention is to use $X_f$ as notation for a $f:X \rightarrow Y$ when viewed as an object of $\mathcal{C}/Y$). This well known correspondence between cocycles and principal bundles extends to morphisms: for any two principal bundles $P_0$, $P_1$, there is a natural bijection between principal bundle morphisms $P_0 \rightarrow P_1$ and natural transformations  $\psi^{P_0}_{\pi_1} \rightarrow \psi^{P_1}_{\pi_2}$ (where $\pi_i: \mathbb{P}_0 \times_{\mathbb{X}} \mathbb{P}_1 \rightarrow \mathbb{P}_{i-1}$, $i=1,2$; these are internal functors). Natural transformations between functors to groupoids are necessarily isomorphisms so this bijection provides a way of establishing the well known fact that all morphisms between principal bundles are isomorphisms. 

The correspondence between cocycles and principal bundles can be made functorial in the sense that for any internal functor $\mathbb{F}:\mathbb{H} \rightarrow \mathbb{G}$ and any principal $\mathbb{H}$-bundle $P$ over $X$, we can define a principal $\mathbb{G}$-bundle over $X$ as the principal bundle corresponding to the cocycle $\mathbb{F}\psi^P$. We write this principal bundle as $\Sigma_{\mathbb{F}}(P)$.  

Note also that by pullback stability properties of effective descent morphisms, any internal functor $\mathbb{P}_0 \times_{\mathbb{X}} \mathbb{P}_1 \rightarrow \mathbb{G}$ is necessarily a cocycle (i.e. its domain is of the form $\mathbb{P}_p$ for some effective descent morphism $p:P \rightarrow X$).

For any internal category $\mathbb{C} = (C_1 \rightrightarrows C_0, ...)$ of $\mathcal{C}$, a cocycle on its core (written $c\mathbb{C}$) is equally an internal functor $\mathbb{P} \rightarrow \mathbb{C}$ because any such functor must factor through the core of $\mathbb{C}$. If we are looking at a cocycle to a core of an internal category we will always assume that the codomain is $\mathbb{C}$; i.e. principal bundles $P$ for the core $c\mathbb{C}$ give rise to cocycles $\mathbb{P} \rightarrow \mathbb{C}$ and vice versa.

\subsection{Stacks}\label{stacks}
A \emph{stack} on $\mathcal{C}$ is a pseudo-functor $M: \mathcal{C}^{op} \rightarrow \mathfrak{CAT}$ such that for any effective descent morphism $f:Y \rightarrow X$, $M(X)$ is equivalent, via the canonical functor, to $Des(M,f)$. Here $Des(M,f)$ is the category whose objects are pairs $(A, \theta^A)$ where $A$ is an object of $M(Y)$ and $M(\pi_1)(A) \xrightarrow{\theta^A} M(\pi_2)(A)$ is descent data for $A$; the morphisms of $Des(M,f)$ are morphisms of $M(Y)$ commuting with the $\theta$s in the obvious manner. This is the usual definition of stack when covers consist of a single morphism that is an effective descent morphism; see Definition B1.5.1 of \cite{Elephant} for background and the preamble to Proposition B1.5.5 for the case where covers consist of only a single morphism. 

\begin{example}
The pseudo-functor $X \mapsto \mathcal{C}/X$ (with $f: X \rightarrow Y$ mapped to the pullback functor $f^*: \mathcal{C}/Y \rightarrow \mathcal{C}/X$) is a stack. This is just by definition of effective descent. We write this stack $\mathfrak{C}:\mathcal{C}^{op} \rightarrow \mathfrak{CAT}$.
\end{example}

\begin{example}
For any groupoid $\mathbb{G}$ internal to $\mathcal{C}$, the pseudo-functor $X \mapsto Prin_{\mathbb{G}}(X)$ is a stack. 
\end{example}

\begin{definition}
A pseudo-functor $M: \mathcal{C}^{op} \rightarrow \mathfrak{CAT}$ is a \emph{geometric} stack if $M(X) \simeq Prin_{\mathbb{G}}(X)$ naturally in objects $X$.
\end{definition}

\begin{proposition}\label{stack_gpd}
    A stack $M: \mathcal{C}^{op} \rightarrow \mathfrak{CAT}$ of groupoids is geometric if and only if the following two conditions hold: 

(i) There exists an object $C$ in $M(G_0)$ for some object $G_0$ of $\mathcal{C}$, such that for any other object $A$ of $M(X)$ there exists a cover $p_A:P_A \rightarrow X$, a morphism $x_A:P_A \rightarrow G_0$ and an isomorphism $M(p_A)(A) \cong M(x_A)(G_0)$.

(ii) There exists an object $G_1$ and a bijection
\begin{eqnarray*}
\mathcal{C}(X,G_1) \cong \{(f,\theta,g) | f,g:X \rightarrow G_0, \theta:M(f)(C) \rightarrow M(g)(C) \} 
\end{eqnarray*}
natural in $X$.
\end{proposition}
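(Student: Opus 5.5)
Both directions will be handled through the cocycle description of principal bundles from the background subsection. Note that the isomorphism in (i) is meant to read $M(p_A)(A)\cong M(x_A)(C)$.

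\emph{Necessity.} Suppose $M(X)\simeq Prin_{\mathbb{G}}(X)$ naturally in $X$.
\begin{itemize}
\item For (i), take $G_0$ to be the object of objects of $\mathbb{G}$ and $C$ the object of $M(G_0)$ corresponding to the canonical principal bundle $\mathbb{G}^{\to}$. This has total space $G_1$, with $d_1:G_1\to G_0$ as the bundle map to the base $G_0$, and $d_0$ as the anchor.
\item Let $A\in M(X)$ correspond to a bundle $P$ with $p_A:P\to X$ and anchor $x_A:P\to G_0$. Pulled back along the effective descent map $p_A$, $P$ acquires a section.
\item A principal bundle with a section $s$ is isomorphic to the pullback of the canonical bundle along $x\circ s$. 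This gives $M(p_A)(A)\cong M(x_A)(C)$.
\item For (ii), $Prin_{\mathbb{G}}(X)$ is a groupoid. An isomorphism $f^*\mathbb{G}^{\to}\cong g^*\mathbb{G}^{\to}$ of pulled-back canonical bundles amounts to a map $X\to G_1$ with domain $f$ and codomain $g$, by the cocycle/natural-transformation bijection applied to trivial cocycles. Naturality in $X$ is evident.
\end{itemize}

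\emph{Sufficiency.} Suppose (i) and (ii) hold.
\begin{itemize}
\item \emph{The groupoid $\mathbb{G}$.} Set $d_0,d_1:G_1\rightrightarrows G_0$ by reading off $f$ and $g$ from the bijection of (ii). By Yoneda, composition, identities and inverses of the isomorphisms $\theta$ in the groupoids $M(X)$ give maps $m,s,i$. The groupoid laws hold because they hold generically, naturality in $X$ ensuring the operations are induced by maps in $\mathcal{C}$.
\item \emph{The functor $Prin_{\mathbb{G}}(X)\to M(X)$.} By (ii), a principal $\mathbb{G}$-bundle $P$ with $p:P\to X$ and cocycle $\mathbb{P}\to\mathbb{G}$ is exactly an object $M(x)(C)\in M(P)$ together with an isomorphism $M(\pi_1)M(x)(C)\cong M(\pi_2)M(x)(C)$ over $P\times_X P$. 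The cocycle identities are precisely the descent cocycle conditions, since composition in $\mathbb{G}$ was defined from composition in $M$. Because $M$ is a stack and $p$ is effective descent, this descent datum determines an object of $M(X)$. Morphisms go through similarly, using the natural transformation description of bundle morphisms.
\item \emph{Essential surjectivity.} This is exactly (i): given $A$, the isomorphism $M(p_A)(A)\cong M(x_A)(C)$ transports the canonical descent datum of $A$ to one on $M(x_A)(C)$. That datum is a cocycle valued in $\mathbb{G}$ by (ii).
\item \emph{Full faithfulness.} Use (ii) over $P_0\times_X P_1$ together with the stack property, matching morphisms of $M(X)$ with natural transformations between cocycles.
\item \emph{Naturality in $X$.} This follows because every construction is by pullback.
\end{itemize}

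\emph{Main obstacle.} The delicate step is the first bullet of sufficiency: checking that Yoneda really yields an internal groupoid and that the bookkeeping of the pseudo-functor's coherence isomorphisms is compatible, e.g.\ $M(\pi_1)M(x)\cong M(x\pi_1)$, so that the cocycle conditions match the descent conditions exactly. I would treat this first, carefully identifying the bijection of (ii) at $X=G_1\times_{G_0}G_1$ to define $m$.
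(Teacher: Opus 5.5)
Your proposal is correct and takes essentially the paper's route. The paper's proof defers to the standard characterisation of geometric stacks (an atlas plus a representable diagonal, as in the works it cites) and remarks only that the groupoid structure on $G_1 \rightrightarrows G_0$ comes from naturality of (ii), with the mate of any $k: X \rightarrow G_1$ of the form $(d_0k,\theta,d_1k)$; this is exactly your Yoneda construction of $\mathbb{G}$, followed by the usual matching of cocycles with descent data, so you have written out the cited argument in full (and you are right that (i) should read $M(p_A)(A)\cong M(x_A)(C)$).
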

\begin{proof}
This is just a restating of a well known characterisation of geometric stacks (where (ii) is more usually stated as saying that the diagonal of $M$ is representable). For example, see Proposition 70 of \cite{Metzler}. A proof can also effectively be taken from Proposition 4.2 of \cite{HenryTowClass}. The groupoid structure comes from naturality of (ii); also by naturality of (ii) we know that the mate of any $k:X \rightarrow G_1$ must be of the form $(d_0k,\theta,d_1k)$ where $d_0,d_1$ are the domain and codomain maps of the groupoid. 
\end{proof}

The final background result on stacks that we need is: 
\begin{proposition}
If $M: \mathcal{C}^{op} \rightarrow \mathfrak{CAT}$ is a stack then the pseudo-functor $M^{\uparrow}:\mathcal{C}^{op} \rightarrow \mathfrak{CAT}$ defined by $M^{\uparrow}(X)= [\mathbf{2},M(X)]$ is also a stack.  
\end{proposition}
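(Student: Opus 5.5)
The plan is to reduce the descent condition for $M^{\uparrow}$ directly to the descent condition for $M$, applied separately to objects and to morphisms. First $M^{\uparrow}$ has to be made into a pseudo-functor. For $f:X\rightarrow Y$ set $M^{\uparrow}(f)=[\mathbf{2},M(f)]$, i.e.\ postcomposition with $M(f)$. The coherence isomorphisms $M(g)M(f)\cong M(fg)$ and $M(1)\cong 1$ are natural, so they whisker to natural isomorphisms between the postcomposition functors, and the pseudo-functor axioms hold pointwise.

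Now fix an effective descent morphism $f:Y\rightarrow X$ and write $\pi_1,\pi_2:Y\times_X Y\rightrightarrows Y$ for the kernel pair projections. The key observation is that $Des(M^{\uparrow},f)$ is isomorphic to $[\mathbf{2},Des(M,f)]$, and that under this identification the comparison functor $M^{\uparrow}(X)\rightarrow Des(M^{\uparrow},f)$ becomes $[\mathbf{2},K]$, where $K:M(X)\rightarrow Des(M,f)$ is the comparison functor for $M$. To check the identification, unpack an object of $Des(M^{\uparrow},f)$. It consists of an arrow $a:A_0\rightarrow A_1$ in $M(Y)$ together with an isomorphism in $[\mathbf{2},M(Y\times_X Y)]$ from $M(\pi_1)(a)$ to $M(\pi_2)(a)$. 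Such an isomorphism is exactly a pair of isomorphisms $\theta^{A_0},\theta^{A_1}$ that form a commuting square with $M(\pi_1)(a)$ and $M(\pi_2)(a)$.

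The cocycle and unit conditions are equations between morphisms of $[\mathbf{2},M(Y\times_X Y\times_X Y)]$, so they hold precisely when they hold componentwise. Hence the data is the same as a pair of descent objects $(A_0,\theta^{A_0})$ and $(A_1,\theta^{A_1})$ together with the morphism $a$ of $Des(M,f)$ between them. Morphisms in $Des(M^{\uparrow},f)$ are commuting squares compatible with the $\theta$'s, which are exactly morphisms in $[\mathbf{2},Des(M,f)]$. One also checks that the canonical descent data on $M(f)(a)$ is the pair of canonical descent data on its endpoints, which follows from the coherence isomorphisms being natural.

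With the identification in place, the conclusion is formal. Since $M$ is a stack, $K$ is an equivalence. The 2-functor $[\mathbf{2},-]$ on $\mathfrak{CAT}$ preserves equivalences, because it preserves natural isomorphisms and hence sends a pseudo-inverse of $K$ to a pseudo-inverse of $[\mathbf{2},K]$. Therefore the comparison functor for $M^{\uparrow}$ is an equivalence, and $M^{\uparrow}$ is a stack.

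The main obstacle is bookkeeping rather than any real difficulty. One must make sure the descent-data conditions for $M^{\uparrow}$, which involve the coherence isomorphisms of $M^{\uparrow}$, really decompose componentwise. This holds because those coherence isomorphisms are themselves defined pointwise from the coherence isomorphisms of $M$.
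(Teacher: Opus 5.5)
Your proof is correct. It takes a different, more self-contained route than the paper, which gives no argument of its own: the paper only says the statement is easy to check by hand and cites Corollary 2.9 of \cite{Stacks79} with $D=\mathbf{2}$. That cited result is stated for a general diagram shape $D$ and says that stacks are closed under passing to categories of $D$-diagrams.

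What you have written is exactly the by-hand check the paper alludes to. Your key identification, $Des(M^{\uparrow},f)\cong[\mathbf{2},Des(M,f)]$ with the comparison functor becoming $[\mathbf{2},K]$, works verbatim for any small category $D$ in place of $\mathbf{2}$. A descent isomorphism in $[D,M(Y\times_X Y)]$ is a family of componentwise descent isomorphisms, and its naturality says precisely that each $F(u)$ is a morphism of descent data. So you in fact recover the cited corollary, at least for external $D$.

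The citation buys brevity and the full generality of the reference. Your version is self-contained and isolates the real reason the result holds: descent data, the cocycle and unit conditions, and the coherence cells of $M^{\uparrow}$ are all computed pointwise.

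One small slip, which does not affect the argument: the unit condition is an equation in $[\mathbf{2},M(Y)]$, obtained by pulling back along the diagonal $Y\rightarrow Y\times_X Y$. It is not an equation in $[\mathbf{2},M(Y\times_X Y\times_X Y)]$.
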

\begin{proof}
    This is easy to check `by hand' and well known, e.g. Corollary 2.9 of \cite{Stacks79} (take $D = \mathbf{2}$ therein).
\end{proof} 

\section{Morphisms between Principal Bundles}\label{prin_c}
A problem with principal bundles is that all morphisms between them are isomorphisms. So if we are hoping to use principal bundles to represent stacks of categories rather just stacks of groupoids we are stuck. In this paper we are proposing a way round the problem by defining a category of principal bundles relative to any internal category. We then ignore the generality (from groupoids to categories) at the level of objects but use it to define morphisms. The idea is encapsulated in the following definition:

\begin{definition}
For an internal category $\mathbb{C}$ in a cartesian category $\mathcal{C}$, define for each object $X$ of $\mathcal{C}$, the data $ Prin_{\mathbb{C}}(X)$ by: 

{\bf Objects}. The objects are principal $c\mathbb{C}$-bundles over $X$.

{\bf Morphisms}. If $P_0$ and $P_1$ are two principal $c\mathbb{C}$-bundles over $X$, then a morphism from $P_0$ to $P_1$ is an internal natural transformation $\psi^{P_0}_{\pi_1} \rightarrow \psi^{P_1}_{\pi_2}$, where $\pi_i: \mathbb{P}_0 \times_{\mathbb{X}} \mathbb{P}_1 \rightarrow \mathbb{P}_{i-1}$, $i=1,2$. (Recall our convention that $\psi^P:\mathbb{P} \rightarrow \mathbb{C}$, so these morphisms are not all isomorphisms.)
\end{definition}

I must thank David Roberts for providing me with this definition of morphism\footnote{And the reader should thank David too, as without this insight proofs about $Prin_{\mathbb{C}}(X)$ become much longer!} which also appears as Definition 5.4 of \cite{David12}. Defining morphisms as certain natural transformations makes it easy to define composition in $Prin_{\mathbb{C}}(X)$: the identity on $P$ is $\psi^P$, and we can compose $\psi^{P_0}_{\pi_1} \xrightarrow{\alpha_0} \psi^{P_1}_{\pi_2}$ followed by $\psi^{P_1}_{\pi_1} \xrightarrow{\alpha_1} \psi^{P_2}_{\pi_2}$ by pulling back to $P_0 \times_X P_1 \times_X P_2$ to obtain a natural transformation $\gamma: \psi^{P_0}_{\pi_1} \rightarrow \psi^{P_1}_{\pi_3}$ by composing $\alpha_0$ with $\alpha_1$ (so $\pi_i:P_0 \times_X P_1 \times_X P_2 \rightarrow P_{i-1}$ and $\gamma (p_0,p_1,p_2) = \alpha_1(p_1,p_2)\alpha_0(p_0,p_1)$). But by naturality $\gamma (p_0,p_1,p_2) = \gamma (p_0,p'_1,p_2)$ for any pair $p_1,p'_1$ of $P_1$, so that $\gamma$ must factor through a morphism $P_0 \times_X P_2 \rightarrow C_1$ which defines the composition of $\alpha_0$ and $\alpha_1$ in $Prin_{\mathbb{C}}(X)$. (Note that we are using `sets with elements' notation, but this is just shorthand for the relevant diagrams in $\mathcal{C}$.) So $Prin_{\mathbb{C}}(X)$ is a category; naturality in $X$ of all these constructions is straightforward.  

However, we must of course make good the claim in the abstract and introduction that the morphisms of $Prin_{\mathbb{C}}(X)$ are principal bundles over the core of the arrow category: 
\begin{proposition}\label{cocycle}
    For any pair of principal $c\mathbb{C}$-bundles $P_0$ and $P_1$, morphisms $P_0 \rightarrow P_1$ of $Prin_{\mathbb{C}}(X)$ are, up to isomorphism, triples $(\beta_0,P,\beta_1)$ where $P$ is a principal $c\mathbb{C}^{\uparrow}$-bundle and $\beta_0:P_0 \rightarrow  \Sigma_{D_0}P$ and $\beta_1 :\Sigma_{D_1}P \rightarrow P_1$ are principal bundle morphisms (isomorphisms). (Here $D_0,D_1:c\mathbb{C}^{\uparrow} \rightrightarrows c\mathbb{C}$ are the internal functors determined by the domain and codomain morphisms of $\mathbb{C}$; we use $c\mathbb{C}^{\uparrow}$ for the core of the arrow category of $\mathbb{C}$). 
\end{proposition}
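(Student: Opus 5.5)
Given a morphism $\alpha:\psi^{P_0}_{\pi_1}\rightarrow\psi^{P_1}_{\pi_2}$ of $Prin_{\mathbb{C}}(X)$, I will build a cocycle into $\mathbb{C}^{\uparrow}$ on the groupoid $\mathbb{Q}=\mathbb{P}_0\times_{\mathbb{X}}\mathbb{P}_1$, with $Q=P_0\times_X P_1$. On objects, $(p_0,p_1)$ goes to the arrow $\alpha(p_0,p_1):x_{P_0}(p_0)\rightarrow x_{P_1}(p_1)$ of $\mathbb{C}$. A morphism $((p_0,p_1),(p_0',p_1'))$ of $\mathbb{Q}$ goes to the commuting square with horizontal sides $\psi^{P_0}(p_0,p_0')$ and $\psi^{P_1}(p_1,p_1')$. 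This square commutes exactly by naturality of $\alpha$. Functoriality follows from functoriality of $\psi^{P_0}$ and $\psi^{P_1}$, and the image lies in $c\mathbb{C}^{\uparrow}$ because the sides are isomorphisms. By the remark that any internal functor out of $\mathbb{P}_0\times_{\mathbb{X}}\mathbb{P}_1$ is a cocycle, this functor $\Phi_\alpha$ is a cocycle (the pullback $Q\rightarrow X$ of effective descent maps is effective descent). So it determines a principal $c\mathbb{C}^{\uparrow}$-bundle $P$.

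By construction $D_0\Phi_\alpha=\psi^{P_0}\pi_1$ and $D_1\Phi_\alpha=\psi^{P_1}\pi_2$ on the nose. The morphisms $\beta_0,\beta_1$ then come from the identity natural transformations $\psi^{P_0}_{\pi_1}\rightarrow (D_0\Phi_\alpha)_{\pi_2}$ over $\mathbb{P}_0\times_{\mathbb{X}}\mathbb{Q}$, and similarly for $\beta_1$. Here I use the stated bijection between principal bundle morphisms and natural transformations of cocycles, together with the fact that $\Sigma_{D_i}P$ corresponds to the cocycle $D_i\Phi_\alpha$. Concretely, $\beta_0$ restricts along the diagonal to $\psi^{P_0}(p_0,p_0')$.

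Conversely, take a triple $(\beta_0,P,\beta_1)$, with cocycle $\psi^P:\mathbb{P}\rightarrow\mathbb{C}^{\uparrow}$ and natural isomorphisms $\beta_0:\psi^{P_0}_{\pi_1}\Rightarrow (D_0\psi^P)_{\pi_2}$ and $\beta_1:(D_1\psi^P)_{\pi_1}\Rightarrow\psi^{P_1}_{\pi_2}$. The natural transformation $\psi^P:D_0\psi^P\Rightarrow D_1\psi^P$ is tautological: an object of $\mathbb{C}^{\uparrow}$ is an arrow, and naturality is commutativity of the squares. Compose on $P_0\times_X P\times_X P\times_X P_1$ to get $\beta_1(p,p_1)\circ\psi^P(p)\circ\beta_0(p_0,p)$, where the middle factor is pulled back along either copy of $P$. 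This is the same "sandwich" composition used for $Prin_{\mathbb{C}}(X)$. Naturality in the $P$ variables shows it is independent of $p$. Since $P\rightarrow X$ is effective descent, it descends to a map $P_0\times_X P_1\rightarrow C_1$, which is natural and hence a morphism of $Prin_{\mathbb{C}}(X)$.

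It remains to check that the two constructions are mutually inverse up to isomorphism. The composite starting from $\alpha$ returns $\alpha$ directly, since the $\beta$'s are identities on the diagonal. Starting from $(\beta_0,P,\beta_1)$, I must exhibit an isomorphism of $c\mathbb{C}^{\uparrow}$-bundles between $P$ and the bundle of $\Phi$ of the descended $\alpha$, compatible with the $\beta$'s. On $P\times_X(P_0\times_X P_1)$ I take the natural transformation whose component is the square with vertical sides $\psi^P(p)$ and $\alpha(p_0,p_1)$ and horizontal sides $\beta_0(p_0,p)^{-1}$ and $\beta_1(p,p_1)$. This square commutes by the definition of $\alpha$. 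I expect this step to be the main obstacle: the bookkeeping of which fibre products the transformations live on, and the descent argument showing the composite factors through $P_0\times_X P_1$. To handle it I would work throughout with generalized elements and use that effective descent morphisms are (regular) epis, so that maps agreeing after pullback are equal.
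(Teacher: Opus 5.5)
Your proposal is correct and follows essentially the same route as the paper. You use the same cocycle $\mathbb{P}_0\times_{\mathbb{X}}\mathbb{P}_1\to c\mathbb{C}^{\uparrow}$ built from the naturality squares of $\alpha$, obtain $\beta_0,\beta_1$ from $D_0\Phi_\alpha=\psi^{P_0}\pi_1$ and $D_1\Phi_\alpha=\psi^{P_1}\pi_2$, and for the converse descend the same sandwich $\beta_1(p,p_1)\,x_P(p)\,\beta_0(p_0,p)$ along $P\to X$. The only differences are that you spell out the mutual-inverse check, via the comparison square with sides $\beta_0(p_0,p)^{-1}$ and $\beta_1(p,p_1)$, which the paper calls routine, and that your second copy of $P$ in $P_0\times_X P\times_X P\times_X P_1$ is unnecessary, since one copy suffices.
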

\begin{proof}
    Given a natural transformation $\alpha: \psi^{P_0}_{\pi_1} \rightarrow \psi^{P_1}_{\pi_2}$ define a cocycle $(\psi^{\alpha}, x_{\alpha}):\mathbb{P}_0 \times_{\mathbb{X}} \mathbb{P}_1 \rightarrow \mathbb{C}^{\uparrow}$ by $x_{\alpha}(p_0,p_1) = \alpha(p_0,p_1)$ and with $\psi^{\alpha}$ given by commuting 
\[
\begin{tikzcd}[column sep=5em, row sep=3em]
\cdot \arrow[r, "\psi^{P_0}(p_0{,}p'_0)" above, "\cong" below] \arrow[d, "\alpha(p_0{,}p_1)"'] 
  & \cdot \arrow[d, "\alpha(p'_0{,}p'_1)"] \\
\cdot \arrow[r, "\psi^{P_1}(p_1{,}p'_1)"' below, "\cong" above] 
  & \cdot
\end{tikzcd}
\]
    for any pair of pairs $(p_0,p_1),(p_0',p_1')$ in $P_0 \times_X P_1$. This cocycle determines a principal bundle $P^{\alpha}$ for $c\mathbb{C}^{\uparrow}$. Because $D_0\psi^{\alpha}$ factors through $\psi^{P_0}$ via $\pi_1:\mathbb{P}_0 \times_{\mathbb{X}} \mathbb{P}_1 \rightarrow \mathbb{P}_0$, the cocycles $D_0\psi^{\alpha}$ and $\psi^{P_0}$ both determine the same principal bundle up to isomorphism; this is because we know that $\pi_1:P_0 \times_X P_1 \rightarrow P_0$ is a pullback stable regular epimorphism as $P_1 \rightarrow X$ is of effective descent. Therefore there is a natural isomorphism $\beta_0^{\alpha}:P_0 \xrightarrow{\cong}\Sigma_{D_0}P^{\alpha}$ and similarly there exists $\beta_1^{\alpha}:\Sigma_{D_1}P^{\alpha} \xrightarrow{\cong} P_1$.

    In the other direction say we are given $P$ a principal bundle for the core of the arrow category, equipped with $\beta_0$ and $\beta_1$ as in the statement of the proposition. As bundle morphisms correspond to natural transformations, the data for $\beta_0$ is a map $P_0 \times_X P \rightarrow C_1$ and similarly for $\beta_1$. So we can define a natural transformation $\gamma: P_0 \times_X P \times_X P_1 \rightarrow C_1$ by $\gamma(p_0,p,p_1) = \beta_1(p,p_1)x_P(p)\beta_0(p_0,p)$; but, similarly to how we just checked that composition in $Prin_{\mathbb{C}}(X)$ is well defined, we have $\gamma (p_0,p,p_1) = \gamma (p_0,p',p_1)$ for any pair $p,p'$ of $P$. So the cocycle corresponding to $P$ factors through $P_0 \times_X P_1$ and so defines a morphism of $Prin_{\mathbb{C}}(X)$. It is routine to verify that these two constructions are inverse to each other up to isomorphism. 
\end{proof}

Using the correspondence between maps of principal bundles and natural transformations between cocycles it is then clear that: 
\begin{proposition}
Given an internal category $\mathbb{C}$ in a cartesian category $\mathcal{C}$,
\begin{eqnarray*}
    Prin_{c \mathbb{C}}(X) \simeq c(Prin_{\mathbb{C}}(X))
\end{eqnarray*}
naturally in objects $X$ of $\mathcal{C}$.
\end{proposition}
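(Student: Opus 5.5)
The plan is to build an identity-on-objects functor $Prin_{c\mathbb{C}}(X) \rightarrow c(Prin_{\mathbb{C}}(X))$ and show it is full and faithful onto the isomorphisms. Both categories have the same objects, namely principal $c\mathbb{C}$-bundles over $X$, so the functor can be the identity on objects. On morphisms we use the background correspondence from the subsection on principal bundles: a principal bundle morphism $P_0 \rightarrow P_1$ of $c\mathbb{C}$-bundles corresponds bijectively to an internal natural transformation $\psi^{P_0}_{\pi_1} \rightarrow \psi^{P_1}_{\pi_2}$ of functors into $c\mathbb{C}$. Such a natural transformation is a map $P_0 \times_X P_1 \rightarrow (c\mathbb{C})_1$. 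Composing with the inclusion $(c\mathbb{C})_1 \rightarrow C_1$ gives a natural transformation of functors into $\mathbb{C}$, that is, a morphism of $Prin_{\mathbb{C}}(X)$.

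The proof then proceeds in three steps.

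\textbf{Functoriality.} In both categories the identity on $P$ is given by $\psi^P$. Composition in both is computed in the same way: pull back to $P_0 \times_X P_1 \times_X P_2$, compose pointwise, and descend to $P_0 \times_X P_2$. The bundle-side composition corresponds to this because the bijection between bundle morphisms and natural transformations is natural in the bundles. Hence the assignment preserves identities and composition.

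\textbf{Image and full faithfulness.} Faithfulness holds because $(c\mathbb{C})_1 \rightarrow C_1$ is a monomorphism. Since the bijection of the background subsection applies verbatim to $c\mathbb{C}$-bundles, the image is exactly the set of morphisms $\alpha: P_0 \times_X P_1 \rightarrow C_1$ of $Prin_{\mathbb{C}}(X)$ that factor through $(c\mathbb{C})_1$. Every such morphism is invertible, since all bundle morphisms are isomorphisms. It therefore remains to show the converse: if $\alpha$ is an isomorphism in $Prin_{\mathbb{C}}(X)$, then $\alpha$ factors through the core.

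\textbf{Main obstacle.} Suppose $\alpha$ has inverse $\alpha'$ in $Prin_{\mathbb{C}}(X)$. Unwinding the definition of composition, $\alpha'(p_1,p_0)\alpha(p_0,p_1)$ equals the descended composite $\gamma(p_0,p_0')$, and this equals $\psi^{P_0}(p_0,p_0')$. Restricting along the diagonal $p_0 = p_0'$ gives $\alpha'(p_1,p_0)\alpha(p_0,p_1) = \mathrm{id}$. Symmetrically, $\alpha(p_0,p_1)\alpha'(p_1,p_0) = \mathrm{id}$. So $(\alpha, \alpha'\circ \mathrm{tw})$ is a pair of mutually inverse arrows, and by the definition of the core as the object of invertible arrows in $C_1$ (a finite limit), $\alpha$ factors through $(c\mathbb{C})_1$. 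Since elements here are generalized elements, this argument is valid internally.

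\textbf{Naturality in $X$.} Both constructions commute with pullback along maps $X' \rightarrow X$ up to the canonical coherence isomorphisms. This is because pullback of cocycles and of natural transformations is just precomposition. This gives the equivalence natural in $X$.
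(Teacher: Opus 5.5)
Your proposal is correct and takes the same route as the paper, which simply invokes the correspondence between principal bundle morphisms and natural transformations of cocycles and calls the equivalence clear. Your diagonal-restriction argument supplies the one point the paper leaves implicit, namely that an isomorphism in $Prin_{\mathbb{C}}(X)$ has components factoring through the core. There is a small slip there: before you restrict to the diagonal, the first factor should read $\alpha'(p_1,p_0')$ rather than $\alpha'(p_1,p_0)$.
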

In other words we haven't introduced or lost any of the usual morphisms (isomorphisms) between principal bundles. 

\section{$Prin_{\mathbb{C}}( \_ )$ is a stack}

\begin{proposition}\label{as_stack}
For any internal category $\mathbb{C}$ of a cartesian category $\mathcal{C}$ the pseudo-functor
\begin{eqnarray*}
    \mathcal{C}^{op} & \rightarrow & \mathfrak{CAT} \\
    X & \mapsto & Prin_{\mathbb{C}}(X) \\
\end{eqnarray*}
is a stack. 
\end{proposition}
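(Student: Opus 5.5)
We need to show that for any effective descent $f:Y\to X$ the canonical comparison $Prin_{\mathbb{C}}(X)\to Des(Prin_{\mathbb{C}},f)$ is an equivalence. The plan is to use the known stack property of $Prin_{c\mathbb{C}}$ for objects and then handle morphisms separately, exploiting that morphisms of $Prin_{\mathbb{C}}(X)$ are natural transformations, i.e.\ maps $P_0\times_X P_1\to C_1$ subject to equations. Concretely we prove three things:
\begin{enumerate}
\item the comparison is \emph{essentially surjective};
\item it is \emph{faithful};
\item it is \emph{full}.
\end{enumerate}

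\textbf{Essential surjectivity.} Descent data $(A,\theta^A)$ for $Prin_{\mathbb{C}}$ along $f$ has $\theta^A$ an isomorphism in $Prin_{\mathbb{C}}(Y\times_XY)$, since descent data is invertible by the cocycle condition. By the proposition $Prin_{c\mathbb{C}}(X)\simeq c(Prin_{\mathbb{C}}(X))$, naturally in $X$, this is the same as descent data for the stack $X\mapsto Prin_{c\mathbb{C}}(X)$. The example that $Prin_{\mathbb{G}}$ is a stack for any internal groupoid, applied to $\mathbb{G}=c\mathbb{C}$, then produces a principal $c\mathbb{C}$-bundle $P$ over $X$ with $f^*P\cong A$ compatibly with $\theta^A$.

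\textbf{Faithfulness.} Let $P_0,P_1$ be bundles over $X$, with pullbacks $Q_i=f^*P_i=Y\times_XP_i$. A morphism $P_0\to P_1$ is a map $\alpha:P_0\times_XP_1\to C_1$ satisfying the naturality equations. Its image in $Prin_{\mathbb{C}}(Y)$ is the composite
\[ Q_0\times_YQ_1\cong Y\times_X(P_0\times_XP_1)\to P_0\times_XP_1\xrightarrow{\alpha}C_1. \]
The map $Y\times_X(P_0\times_XP_1)\to P_0\times_XP_1$ is a pullback of $f$, so it is effective descent and in particular epi. Hence the assignment is injective.

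\textbf{Fullness.} This is the expected main obstacle. Take $\beta:Q_0\times_YQ_1\to C_1$, natural and compatible with the descent isomorphisms. Write $E=Y\times_X(P_0\times_XP_1)$, so $\beta:E\to C_1$. Compatibility with descent data says exactly that the two composites $E\times_{P_0\times_XP_1}E\cong (Y\times_XY)\times_X(P_0\times_XP_1)\rightrightarrows E\xrightarrow{\beta}C_1$ agree. Here one must check carefully that, because the descent data for $f^*P_i$ is the canonical one, the condition ``$\beta$ commutes with $\theta$'' unwinds precisely to $\beta(y,p_0,p_1)=\beta(y',p_0,p_1)$. Since $E\to P_0\times_XP_1$ is effective descent, hence a regular (coequalizer) epi of its kernel pair, $\beta$ factors uniquely as $\alpha:P_0\times_XP_1\to C_1$. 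It remains that $\alpha$ satisfies the naturality equations, i.e.\ $\alpha(p_0',p_1')\psi^{P_0}(p_0,p_0')=\psi^{P_1}(p_1,p_1')\alpha(p_0,p_1)$ and the domain and codomain conditions. These are equalities of maps out of $(P_0\times_XP_1)\times_X(P_0\times_XP_1)$, or out of $P_0\times_XP_1$. After precomposing with the corresponding pullback of $f$, which is again epi by pullback stability of effective descent, they become the naturality equations for $\beta$, which hold; hence they hold for $\alpha$.

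Finally, the pseudo-functoriality data and naturality are routine, as noted after the definition of $Prin_{\mathbb{C}}$. The only delicate bookkeeping is the identification of descent compatibility in the fullness step, and using effective descent of pullbacks of $f$ (rather than of $f$ itself) as regular epis throughout.
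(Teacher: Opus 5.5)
Your proof is correct. Essential surjectivity is exactly the paper's argument: descent data is invertible, so it lives in the core, and $Prin_{c\mathbb{C}}(X)\simeq c(Prin_{\mathbb{C}}(X))$ together with the stack property of $Prin_{c\mathbb{C}}$ finishes it.

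For full faithfulness you take a genuinely different route. The paper never works with the maps $P_0\times_XP_1\to C_1$ directly. It uses Proposition \ref{cocycle} to identify morphisms $P_0\to P_1$, up to isomorphism, with principal $c\mathbb{C}^{\uparrow}$-bundles $P$ equipped with isomorphisms $P_0\cong\Sigma_{D_0}P$ and $\Sigma_{D_1}P\cong P_1$. It then appeals to the stack property of $X\mapsto Prin_{c\mathbb{C}^{\uparrow}}(X)$, so descent of morphisms of $Prin_{\mathbb{C}}$ becomes descent of objects for the core of the arrow category.

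You instead show directly that morphisms form a sheaf: they are maps into $C_1$ cut out by equations, and pullbacks of $f$ are coequalizers of their kernel pairs. The unwinding you deferred does hold. The canonical descent isomorphisms correspond to the transformations $(y,y',p_i,p'_i)\mapsto\psi^{P_i}(p_i,p'_i)$. Using naturality of $\beta$ and the fact that each $\psi^{P_i}(p,p)$ is an identity, the two composites in $Prin_{\mathbb{C}}(Y\times_XY)$ evaluate at $(y,y',p_0,p_1)$ to $\beta(y,p_0,p_1)$ and $\beta(y',p_0,p_1)$ respectively.

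Your route buys two things:
\begin{itemize}
\item an exact bijection of hom-sets, rather than one extracted from a correspondence valid only up to isomorphism, whose compatibility with pullback the paper leaves implicit;
\item the observation that full faithfulness only needs $f$ to be a pullback-stable regular epimorphism.
\end{itemize}
The paper's route is shorter once Proposition \ref{cocycle} is available. It also makes the structural point that morphisms of $Prin_{\mathbb{C}}$ are themselves principal bundles, mirroring how Proposition \ref{lax-conditions} handles morphisms via $M^{\uparrow}$.
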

\begin{proof}
As recalled in \S \ref{stacks} $X \mapsto Prin_{c\mathbb{C}}(X)$ is a stack; it is a stack of groupoids and we have just shown $Prin_{c\mathbb{C}}(X) \simeq cPrin_{\mathbb{C}}(X)$. So given an effective descent morphism $f: Y \rightarrow X$ of $\mathcal{C}$ we know that the canonical map
\begin{eqnarray*}
f^*: cPrin_{\mathbb{C}}(X) \rightarrow Des( cPrin_{\mathbb{C}}(Y), f)
\end{eqnarray*}
is an equivalence. Because descent data is always an isomorphism we also know that $Des( cPrin_{\mathbb{C}}(Y), f) \simeq cDes( Prin_{\mathbb{C}}(Y), f)$. So we know that the core of the canonical map $f^*:Prin_{\mathbb{C}}(X) \rightarrow Des( Prin_{\mathbb{C}}(Y), f)$ is an equivalence from which it is clear that $f^*$ is essentially surjective.
To prove that $f^*$ is full and faithful we must show for any two principal bundles $P_0$ and $P_1$ that $f^*$ induces a bijection of homsets
\begin{eqnarray*}
    Prin_{\mathbb{C}}(X)(P_0,P_1) & \cong & Des(Prin_{\mathbb{C}}(Y),f)(f^*P_0,f^*P_1)
\end{eqnarray*}
But $X \mapsto Prin_{c\mathbb{C}^{\uparrow}}(X)$ is a stack and so $Prin_{c\mathbb{C}^{\uparrow}}(X) \simeq Des( Prin_{c\mathbb{C}^{\uparrow}}(Y), f)$ which provides the bijection to cocycles of $c\mathbb{C}^{\uparrow}$. This completes the proof as we have just established in Prop. \ref{cocycle} that cocycles of $c\mathbb{C}^{\uparrow}$ are, up to isomorphism, the same thing as natural transformations $\psi^{P_0}_{\pi_1} \rightarrow \psi^{P_1}_{\pi_2}$.
\end{proof}

\section{Lax-geometric stacks}
\begin{definition}
    A pseudo-functor $M: \mathcal{C}^{op} \rightarrow \mathfrak{CAT}$ is a \emph{lax-geometric stack} if it is naturally equivalent to $Prin_{\mathbb{C}}(\_)$ for some category $\mathbb{C}$ internal to $\mathcal{C}$.
\end{definition}
Note that by Proposition \ref{as_stack} any lax-geometric stack is a stack, so the choice of wording in the definition is appropriate. We use $\mathbb{C}_M$ for the internal category of a lax-geometric stack $M$ and may also talk of $\mathbb{C}_M$ `representing' the stack. But it is important to understand that we are not saying that this `representing' category  is unique (even up to Morita equivalence, as it is not a groupoid). We hope to understand the correct notion of Morita equivalence for internal categories in later work.

\begin{proposition}\label{lax-conditions}
    A pseudo-functor $M: \mathcal{C}^{op} \rightarrow \mathfrak{CAT}$ is a \emph{lax-geometric stack} if and only if it is a stack and; 

(i) There exists an object $C$ in $M(C_0)$ for some object $C_0$ of $\mathcal{C}$ such that for any other object $A$ of $M(X)$, there exists a cover $p_A:Y \rightarrow X$, a morphism $q_A:Y \rightarrow C_0$ and an isomorphism $M(p_A)(A) \cong M(q_A)(C_0)$.

(ii) There exists an object $C_1$ and a bijection
\begin{eqnarray*}
\mathcal{C}(X,C_1) \cong \{(f,\theta,g) | f,g:X \rightarrow C_0, \theta:M(f)(C) \rightarrow M(g)(C) \} 
\end{eqnarray*}
natural in $X$.
\end{proposition}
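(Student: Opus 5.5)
The plan is to prove both directions separately, leaning on Proposition \ref{stack_gpd} for the groupoid part and on Propositions \ref{cocycle} and \ref{as_stack} for the categorical part. (Note that in (i) the isomorphism should read $M(p_A)(A)\cong M(q_A)(C)$.)

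\emph{Only if.} If $M \simeq Prin_{\mathbb{C}}(\_)$ then $M$ is a stack by Proposition \ref{as_stack}. For (i), take $C_0$ to be the object of objects of $\mathbb{C}$ and $C$ the canonical principal $c\mathbb{C}$-bundle over $C_0$, namely $d_1 : C_{iso} \to C_0$ with the action of $c\mathbb{C}$ by postcomposition, where $C_{iso}$ is the object of isomorphisms. Given a principal bundle $P$ over $X$ with cover $p:P\to X$, the map $x_P:P\to C_0$ pulls $C$ back to $p^*P$, by the usual trivialisation argument for principal bundles. This needs only the cocycle correspondence, which was assumed valid in any cartesian category.

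For (ii) we need more than the groupoid case. For $f,g : X \to C_0$, the bundles $M(f)(C)$ and $M(g)(C)$ are trivial, i.e.\ their cocycles are defined over the identity cover of $X$. A morphism between them is then a natural transformation from $f$ to $g$ viewed as functors from the discrete groupoid on $X$. This is exactly a map $X \to C_1$ with domain $f$ and codomain $g$. Naturality in $X$ is immediate from the pullback construction, so $C_1$ represents the functor in (ii).

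\emph{If.} Given (i) and (ii), first build the internal category. Yoneda applied to the natural bijection (ii) gives $d_0,d_1:C_1\to C_0$, identities and composition (composition via $C_1\times_{C_0}C_1$), and the category axioms follow from those of the fibres $M(X)$, exactly as the groupoid structure arises in Proposition \ref{stack_gpd}. Restricting (ii) to invertible $\theta$ gives the core $c\mathbb{C}$. Applying Proposition \ref{stack_gpd} to the stack of groupoids $cM$ then gives $cM \simeq Prin_{c\mathbb{C}}(\_)$.

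Next extend this to morphisms. Given $A_0,A_1\in M(X)$, choose covers $P_0$, $P_1$ as in (i) and pass to the common cover $P_0\times_X P_1$. There, a morphism $A_0\to A_1$ pulls back to a morphism between $M(x_0\pi_1)(C)$ and $M(x_1\pi_2)(C)$, hence by (ii) to a map $P_0\times_X P_1 \to C_1$. Compatibility with the descent isomorphisms is precisely the naturality square in the definition of morphisms of $Prin_{\mathbb{C}}(X)$. Conversely, such a natural transformation is a morphism of $M(P_0\times_X P_1)$ compatible with descent data. Because $M$ is a stack and $P_0\times_X P_1\to X$ is a cover, it descends uniquely to a morphism $A_0\to A_1$. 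This gives full faithfulness; essential surjectivity and compatibility with composition come from the groupoid case and functoriality of the construction.

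I expect the main obstacle to be this last step: checking that the bijection on homsets is independent of the chosen covers and respects composition. Composition in $Prin_{\mathbb{C}}$ is defined by factoring through a triple fibre product, so I would verify that pulling back to $P_0\times_XP_1\times_XP_2$ commutes with the descent equivalence. The approach is to use full faithfulness of $M(X)\to Des(M,f)$ for the relevant covers, mirroring the homset argument in Proposition \ref{as_stack}.
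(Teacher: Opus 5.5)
Your proposal is correct. Your \emph{only if} direction is essentially the paper's: stack by Proposition \ref{as_stack}, (i) from the groupoid case, and (ii) from the triviality of $M(f)(C)$ and $M(g)(C)$. You are also right that (i) should read $M(p_A)(A)\cong M(q_A)(C)$.

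For the morphism part of the \emph{if} direction you take a genuinely different route. The paper never touches homsets of $M$ directly. It uses Lemma \ref{stability} to transfer (i) and (ii) from $M$ to $M^{\uparrow}$, and uses the fact that $M^{\uparrow}$ is a stack. It then applies Proposition \ref{stack_gpd} to $cM^{\uparrow}$, so that objects of $M^{\uparrow}(X)$, i.e.\ morphisms of $M(X)$, become principal $c\mathbb{C}^{\uparrow}$-bundles. Finally it invokes Proposition \ref{cocycle} to identify these with morphisms of $Prin_{\mathbb{C}}(X)$.

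You instead descend morphisms along $P_0\times_X P_1\to X$, using only full faithfulness of $M(X)\to Des(M,q)$. The bijection (ii), together with its compatibility with composition ($\theta^{k'}\theta^{k}=\theta^{k'k}$), turns descent-compatibility into exactly the naturality square. What you buy is a more elementary, self-contained argument. It bypasses $M^{\uparrow}$, the lemma and Proposition \ref{cocycle}, and it makes compatibility with composition checkable on $P_0\times_X P_1\times_X P_2$, exactly as composition in $Prin_{\mathbb{C}}(X)$ is defined. The paper's route instead recycles the groupoid result as a black box and makes the slogan ``morphisms are $c\mathbb{C}^{\uparrow}$-bundles'' visible. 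However, it leaves implicit both composition and the matching of $D_0,D_1$ with domain and codomain in $M$.

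Much of the cover-independence bookkeeping you flag can be avoided. For each $A$, take as the cover in (i) the total space $Q_A\to X$ of the principal $c\mathbb{C}$-bundle assigned to $A$ by the groupoid equivalence; it trivialises $A$ by naturality of that equivalence. Then, once the resulting cocycle is identified with $\psi^{Q_A}$, your bijection lands directly in $Prin_{\mathbb{C}}(X)(Q_{A_0},Q_{A_1})$.
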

To ease the proof we isolate some of the reasoning needed as a lemma:
\begin{lemma}\label{stability}
    If a pseudo-functor $M: \mathcal{C}^{op} \rightarrow \mathfrak{CAT}$ satisfies (i) and (ii) then so too does $M^{\uparrow}: \mathcal{C}^{op} \rightarrow \mathfrak{CAT}$. 
\end{lemma}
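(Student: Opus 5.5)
We construct the data for $M^{\uparrow}$ directly from the data $(C_0,C,C_1)$ for $M$. For (ii) applied to $M^{\uparrow}$ we will take the new object of objects to be $C_1$, and the generic object of $M^{\uparrow}(C_1)$ to be the arrow $\theta^{u}:M(d_0)(C)\rightarrow M(d_1)(C)$ of $M(C_1)$. Here $u:C_1\rightarrow C_1$ is the identity, $d_0,d_1:C_1\rightarrow C_0$ are the first and third components of its mate under (ii), and $\theta^u$ is the middle component. By naturality of the bijection in (ii), any map $k:X\rightarrow C_1$ corresponds to $(d_0k,\,M(k)(\theta^u),\,d_1k)$, up to the coherence isomorphisms of the pseudo-functor. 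So maps $X\rightarrow C_1$ are exactly arrows $M(f)(C)\rightarrow M(g)(C)$ in $M(X)$, and $M(k)$ of the generic arrow is the arrow classified by $k$.

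\emph{Condition (i) for $M^{\uparrow}$.} Take an object $a:A\rightarrow B$ of $M^{\uparrow}(X)$. Apply (i) to $A$, giving a cover $p_A:Y_A\rightarrow X$, and apply it to $B$, giving a cover $p_B:Y_B\rightarrow X$. Let $Y=Y_A\times_X Y_B$; it is a cover because effective descent morphisms are stable under pullback and composition. On $Y$ we have $f,g:Y\rightarrow C_0$ and isomorphisms $M(p)(A)\cong M(f)(C)$ and $M(p)(B)\cong M(g)(C)$. Conjugating $M(p)(a)$ by these isomorphisms gives an arrow $M(f)(C)\rightarrow M(g)(C)$, and (ii) turns this into a map $q:Y\rightarrow C_1$. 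Then $M^{\uparrow}(p)(a)\cong M^{\uparrow}(q)(\theta^u)$ in the arrow category, the isomorphism being the pair of isomorphisms just used, which commute with the arrows by construction.

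\emph{Condition (ii) for $M^{\uparrow}$.} A triple $(f,\Theta,g)$ with $f,g:X\rightarrow C_1$ and $\Theta:M(f)(\theta^u)\rightarrow M(g)(\theta^u)$ is a commuting square in $M(X)$. Its vertical arrows are the arrows classified by $f$ and $g$, and its horizontal arrows are $\Theta_0:M(d_0f)(C)\rightarrow M(d_0g)(C)$ and $\Theta_1:M(d_1f)(C)\rightarrow M(d_1g)(C)$. By (ii) each horizontal arrow is a map $X\rightarrow C_1$. So such triples are quadruples of maps $X\rightarrow C_1$ with matching endpoints in $C_0$, subject to the square commuting. We therefore take
\[C_1^{\uparrow}=\{(u,v,h_0,h_1)\in C_1^4 \mid d_0h_0=d_0u,\ d_1h_0=d_0v,\ d_0h_1=d_1u,\ d_1h_1=d_1v,\ m(h_1,u)=m(v,h_0)\},\]
a finite limit in $\mathcal{C}$. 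Here $m:C_1\times_{C_0}C_1\rightarrow C_1$ is composition, obtained from (ii) by naturality: the pair of maps out of the pullback gives two composable arrows, and their composite in $M(C_1\times_{C_0}C_1)$ is classified by a map to $C_1$.

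The main obstacle is checking that the equation $m(h_1,u)=m(v,h_0)$ really expresses commutativity of the square in $M(X)$. This needs naturality in $X$ of the bijection (ii), together with compatibility of the composition of arrows in $M(X)$ with the pseudo-functor's coherence isomorphisms. We will handle it by applying (ii) at $X$ itself, where injectivity of the bijection converts equality of arrows into equality of maps. After that, naturality in $X$ of the resulting bijection for $C_1^{\uparrow}$ is routine, since every step is natural in $X$.
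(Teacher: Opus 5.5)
Your proposal is correct and follows essentially the paper's argument: the same generic object $Id^M_{C_1}$ (your $\theta^{u}$) over $C_1$, the same two-covers-and-pullback construction for (i), and for (ii) the same object $(\mathbb{C}^{\uparrow})_1$ of commuting squares in $\mathbb{C}$, with commutativity in $M(X)$ matched to the equation in $C_1$ because the mate of $m(k,k')$ is the composite $\theta^{k'}\theta^{k}$. Your appeal to injectivity of the bijection in (ii) also makes explicit the converse direction (a commuting square in $M(X)$ gives a map into $(\mathbb{C}^{\uparrow})_1$), which the paper leaves implicit.
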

\begin{proof}
Firstly (i) and (ii) are sufficient to make $C_1$, $C_0$ into the object of morphisms and the object of objects respectively of an internal category $\mathbb{C}$. This is clear by following the same reasoning deployed in the proof of Proposition \ref{stack_gpd}. From that analysis we recall that by naturality of (ii) the mate of any $k:X \rightarrow C_1$ is always of the form $(d_0k,\theta^k,d_1k)$ with
\begin{eqnarray*}
\theta^k = M(d_0 k) \cong M(k)M(d_1) \xrightarrow{M(k)(Id^M_{C_1})} M(k)M(d_1) \cong M(d_1 k)
\end{eqnarray*}
where $Id^M_{C_1}$ is obtained via (ii) as the mate of the identity map $Id_{C_1}:C_1 \rightarrow C_1$. Also if $k,k':X \rightarrow C_1$ are such that $d_1k = d_0k'$ then the mate of $kk' = m(k,k')$ is
\begin{eqnarray*}
    M(d_0k)(C) \xrightarrow{\theta^k} M(d_1k)(C) = M(d_0k')(C) \xrightarrow{\theta^{k'}} M(d_1k')(C)\text{.}
\end{eqnarray*}
In other words $\theta^{k'}\theta^{k}=\theta^{k'k}$.

So let us assume (i) and (ii) for $M$. We initially focus on proving (i) for $M^{\uparrow}$. For this take $Id^M_{C_1}:M(d_0)(C) \rightarrow M(d_1)(C)$ which is an object of $M^{\uparrow}(C_1)$. Then for any morphism $v:A \rightarrow B$ of $M(X)$ we must find an effective descent morphism $q_v:Q \rightarrow X$ and a morphism $x_v:Q \rightarrow C_1$ together with an isomorphism: $M^{\uparrow}(q_v)(A \xrightarrow{v} B) \cong M^{\uparrow}(x_v)(Id^M_{C_1})$. By (i) applied to $M$ we can find effective descent morphisms $q_A:Y_A \rightarrow X$ and $q_B:Y_B \rightarrow X$, morphisms $x_A:Y_A \rightarrow C_0$ and $x_B:Y_B \rightarrow C_0$ and isomorphisms $M(q_A)(A) \cong M(x_A)(C)$ and $M(q_B)(B) \cong M(x_B)(C)$. Let $Q$ be the pullback of $q_A$ along $q_B$ and define $q_v$ to be the composition $q_A \pi_1$ ($=q_B\pi_2$). Define $f = Q \xrightarrow{\pi_1} Y_A \xrightarrow{x_A} C_0$, $g = Q \xrightarrow{\pi_2} Y_B \xrightarrow{x_B} C_0$. By applying (ii) to $M$ we can then find $x_v: Q \rightarrow C_1$ as the mate of
\begin{eqnarray*}
M(f)(C) \xrightarrow{\cong} M(q_v)(A) \xrightarrow{M(q_v)(v)} M(q_v)(B) \xrightarrow{\cong} M(g)(C)
\end{eqnarray*}
and we have $M^{\uparrow}(q_v)(A \xrightarrow{v} B) \cong M^{\uparrow}(x_v)(Id^M_{C_1})$ by construction. This verifies (i) for $M^{\uparrow}$.
 
For (ii), say $k: X \rightarrow (\mathbb{C}^{\uparrow})_1$; so $k=(k^t,k^b,k^l,k^r):X \rightarrow C_1 \times_{C_0} C_1 \times_{C_0} C_1 \times_{C_0} C_1$ where for any $x$ of $X$ we have 
\[
\begin{tikzcd}
\cdot \arrow[r, "k^t(x)"] \arrow[d, "k^l(x)"'] & \cdot \arrow[d, "k^r(x)"] \\
\cdot \arrow[r, "k^b(x)"'] & \cdot
\end{tikzcd}
\]
commuting. But then $(k^l,\theta^k,k^r)$ is its required mate, where
\begin{eqnarray*}
\theta^k:M^{\uparrow}(k^l)(Id^M_{C_1}) \rightarrow M^{\uparrow}(k^r)(Id^M_{C_1})
\end{eqnarray*}
is given by the pair $(\theta^{k^t}, \theta^{k^b})$. That $\theta^k$ is indeed a morphism of $M^{\uparrow}(X)$ follows from our earlier observation that for arbitrary composable $k,k'$, $\theta^{k'k}=\theta^{k'}\theta^k$. 
\end{proof}

\begin{proof} \emph{(Of Proposition \ref{lax-conditions})}

    Firstly say (i) and (ii) are satisfied. As in the proof of the lemma we can define a category $\mathbb{C}$ whose object of objects is $C_0$ and whose object of morphisms is $C_1$. Then by considering the core of $\mathbb{C}$ we clearly have
    \begin{eqnarray*}
        \mathcal{C}(X,cC_1) \cong \{(f,\theta,g) | f,g:X \rightarrow C_0, \theta:M(f)(C) \rightarrow M(g)(C) \text{, $\theta$ iso.} \} 
    \end{eqnarray*}
    naturally in $X$. We can combine this with (i) and apply Proposition \ref{stack_gpd} to conclude that $cM:\mathcal{C}^{op} \rightarrow \mathfrak{CAT}$ is geometric.
    So we know that the objects of $M(X)$ are principal $c \mathbb{C}$-bundles and to complete one direction of the proof we have to focus on the morphisms.

    However we have proved in the lemma that $M^{\uparrow}$ satisfies (i) and (ii) and we have seen that the category we get is $\mathbb{C}^{\uparrow}$. So by again restricting to the core (this time of $\mathbb{C}^{\uparrow}$) we see that the objects of $M^{\uparrow}(X)$ are, up to isomorphism, principal $c\mathbb{C}^{\uparrow}$-bundles over $X$. But the objects of $M^{\uparrow}(X)$ are morphisms of $M(X)$ and so $M(X) \simeq Prin_{\mathbb{C}}(X)$.

    In the other direction, say $M(X) \simeq Prin_{\mathbb{C}}(X)$. Then (i) follows by Proposition \ref{stack_gpd} as it is, effectively, only about objects. Condition (ii) requires a natural bijection between morphisms $X\rightarrow C_1$ and triples $(f,\theta,g)$ where $\theta$ is a natural transformation $\psi^{f^*c\mathbb{C}}_{\pi_1} \rightarrow \psi^{g^*c\mathbb{C}}_{\pi_2}$ and $f,g:X \rightrightarrows C_0$. Such a natural transformation is given by a morphism $f^*cC_1 \times_{X} g^*cC_1 \rightarrow C_1$; as the principal bundles $f^*c\mathbb{C}$ and $g^*c\mathbb{C}$ are trivial it is clear that any such natural transformation is uniquely determined by a map $X \rightarrow C_1$. 

\end{proof}

\section{Locally Compact Locales form a Stack}\label{lc_is_stack}

The purpose of this section is to prove that the pseudo-functor that assigns to each locale $X$ the category of locally compact locales over $X$ is a stack. We shall do this by first proving that exponentiability descends for any cartesian category $\mathcal{C}$ and then applying to the case $\mathcal{C} = \mathbf{Loc}$. 

Recall that for any morphism $f: X \rightarrow Y$ of a cartesian category $\mathcal{C}$, pullback $f^*: \mathcal{C}/Y \rightarrow \mathcal{C}/X$ preserves any exponentials that exist in $\mathcal{C}/Y$; i.e. $f^*(A^B) \cong f^*A^{f^*B}$. This is because the pullback adjunction $\Sigma_f \dashv f^*$ satisfies Frobenius reciprocity. The result we need to prove that locally compact locales form a stack goes in the other direction: exponentiability descends. In fact this is true quite generally: 
\begin{proposition}
    Let $\mathcal{C}$ be a cartesian category, $S$ an object of $\mathcal{C}$ and $f: X \rightarrow Y$ a morphism of $\mathcal{C}$ that is of effective descent. For any object $Z_g$ over $Y$, if the exponential $S_X^{f^*(Z_g)}$ exists in $\mathcal{C}/X$ then $S_Y^{Z_g}$ exists in $\mathcal{C}/Y$.
\end{proposition}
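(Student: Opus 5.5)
Write $E = S_X^{f^*(Z_g)}$ for the given exponential in $\mathcal{C}/X$; here $S_X$ denotes $S \times X \to X$, i.e. $f^*(S_Y)$. The plan is to equip $E$ with descent data relative to $f$, descend it to an object $W$ of $\mathcal{C}/Y$, and check that $W$ satisfies the universal property of $S_Y^{Z_g}$.

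\textbf{Descent data.} Let $\pi_1,\pi_2: X\times_Y X \to X$ be the two projections, so $f\pi_1 = f\pi_2$. Pullback preserves exponentials by Frobenius reciprocity, as recalled above. Hence
\[
\pi_1^*E \cong S_{X\times_Y X}^{(f\pi_1)^*Z_g} = S_{X\times_Y X}^{(f\pi_2)^*Z_g} \cong \pi_2^*E .
\]
Call this composite $\theta: \pi_1^*E \to \pi_2^*E$. It is built from the canonical comparison isomorphisms, which are determined by the universal property of exponentials. So the cocycle condition over $X\times_Y X\times_Y X$ and the unit condition along the diagonal should follow from uniqueness: both sides are canonical isomorphisms between two presentations of the same exponential $S^{h^*Z_g}$. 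Since $f$ is of effective descent, $f^*:\mathcal{C}/Y\to Des(f)$ is an equivalence, and there is $W$ in $\mathcal{C}/Y$ with $f^*W \cong E$ compatibly with $\theta$.

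\textbf{Evaluation map.} Next I would descend the evaluation $ev: E\times_X f^*Z_g \to S_X$. Its domain is $f^*(W\times_Y Z_g)$ and its codomain is $f^*S_Y$, both carrying their canonical descent data. The map $ev$ is compatible with that data, because the comparison isomorphisms for exponentials commute with evaluation; this is essentially their definition. By full faithfulness of $f^*$ (monadicity), $ev = f^*(e)$ for a unique $e: W\times_Y Z_g \to S_Y$.

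\textbf{Universal property.} Take any $T$ over $Y$ and any $h: T\times_Y Z_g \to S_Y$. Pulling back, $f^*h$ corresponds to a unique $\bar h: f^*T \to E$. I need $\bar h$ to commute with descent data, i.e. $\pi_1^*\bar h$ and $\pi_2^*\bar h$ should agree modulo $\theta$ and the canonical isomorphism on $f^*T$. Both are transposes of the same map $(f\pi_i)^*h$ into the exponential over $X\times_Y X$, so uniqueness of transposes gives the agreement. Hence $\bar h$ descends uniquely to $\tilde h: T \to W$ with $e\circ(\tilde h\times Z_g) = h$. This equation holds because it holds after applying the faithful functor $f^*$. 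Uniqueness of $\tilde h$ follows the same way: any two solutions pull back to transposes of $f^*h$ and so coincide.

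I expect the main obstacle to be the bookkeeping in the coherence checks: the cocycle condition for $\theta$, and the compatibility of $\bar h$ with descent data. The approach is to reduce every such check to the uniqueness clause of the universal property of exponentials in some slice, using that pullback functors preserve the relevant exponentials (Frobenius). Throughout, effective descent is used only in two forms: $f^*$ is fully faithful, and descent data is effective.
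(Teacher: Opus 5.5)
Your proposal is correct and follows the same route as the paper. Both equip $S_X^{f^*(Z_g)}$ with the descent data induced by the canonical descent data on $f^*Z_g$ (the paper writes this as $S_{X\times_Y X}^{\theta^{-1}}$), descend it to an object over $Y$, and then show that this object is $S_Y^{Z_g}$. The only difference is that the paper dismisses the last step as the routine observation that exponentials in $Des(\mathfrak{C},f)$ are created in $\mathcal{C}/X$, whereas you carry out that verification explicitly by descending the evaluation map and the transposes.
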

\begin{proof}
    $f^*Z_g$ comes equipped with descent data $\theta: \pi_1^*f^*Z_g \xrightarrow{\cong} \pi_2^*f^*Z_g$. So if $S_X^{f^*(Z_g)}$ exists, it can also be equipped with descent data: $S_{X\times_Y X}^{\theta^{-1}}$. So, $S_X^{f^*(Z_g)}$ must be of the form $f^*E_e$ for some object $E_e$ over $Y$. Proving that $E_e$ must be $S_Y^{Z_g}$, as we would expect, requires the observation that exponentials in $Des(\mathfrak{C},f)$ are created in $\mathcal{C}/X$; this is routine. 
\end{proof}

To proceed we now need to focus on the category $\mathbf{Loc}$ of locales; it is cartesian and will play the role of $\mathcal{C}$ in the above analysis. The stack $\mathfrak{LOC}:\mathbf{Loc}^{op} \rightarrow \mathfrak{CAT}$ given by $X \mapsto \mathbf{Loc}/X$ is equivalent to $X \mapsto \mathbf{Loc}_{Sh(X)}$ (\cite{JoyT}; or Parts C1 and C2 \cite{Elephant}). Under this isomorphism the Sierpi\'{n}ski locale relative to $Sh(X)$ corresponds to $\mathbb{S}_X$ (i.e. the projection $\mathbb{S} \times X \rightarrow X$). A locale $X$ is \emph{locally compact} if its frame of opens is continuous (Ch. VII \cite{Stone}). By Hyland's result (e.g. Ch. VII, Th. 4.11 of \cite{Stone}) we know that a locale $Z$ is locally compact iff the exponential $\mathbb{S}^Z$ exists. Hyland's result is constructive so it holds relative to any $Sh(X)$. Therefore because pullback preserves exponentials we know that $\mathfrak{LOC}$ restricts to a pseudo-functor $\mathfrak{LK}$, where $\mathfrak{LK}(X)$ is the full subcategory of locally compact locales in $Sh(X)$. But we have also just shown that exponentials descend and have therefore proved:     
\begin{proposition}
The pseudo-functor $\mathfrak{LK}:\mathbf{Loc}^{op} \rightarrow \mathfrak{CAT}$ defined by $X \mapsto \mathbf{LK}_{Sh(X)}$ is a stack.
\end{proposition}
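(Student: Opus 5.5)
The plan is to show that $\mathfrak{LK}$ is a sub-pseudo-functor of the stack $\mathfrak{LOC}$, then check that it is closed under the two parts of descent. Closure under objects will come from descent of exponentiability; closure under morphisms is automatic because $\mathfrak{LK}$ is full.

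First, $\mathfrak{LK}$ is a well-defined pseudo-functor. We use the identification $\mathbf{Loc}/X \simeq \mathbf{Loc}_{Sh(X)}$, under which the Sierpi\'{n}ski locale of $Sh(X)$ corresponds to $\mathbb{S}_X$. Hyland's theorem holds internally in $Sh(X)$, so an object $Z_g$ of $\mathbf{Loc}/X$ is locally compact in $Sh(X)$ iff the exponential $\mathbb{S}_X^{Z_g}$ exists in $\mathbf{Loc}/X$. Pullback $f^*$ preserves exponentials, by Frobenius reciprocity as recalled above, and $f^*\mathbb{S}_Y \cong \mathbb{S}_X$. Hence $f^*$ carries $\mathfrak{LK}(Y)$ into $\mathfrak{LK}(X)$. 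Since $\mathfrak{LK}(X)$ is defined as a full subcategory, the coherence isomorphisms of $\mathfrak{LOC}$ restrict, and $\mathfrak{LK}$ is a pseudo-functor.

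Next, fix an effective descent morphism $f:X\rightarrow Y$ of $\mathbf{Loc}$. We must show that the canonical functor $\mathfrak{LK}(Y)\rightarrow Des(\mathfrak{LK},f)$ is an equivalence.

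\textbf{Fully faithful.} Morphisms in $Des(\mathfrak{LK},f)$ are exactly morphisms in $Des(\mathfrak{LOC},f)$ between objects lying in $\mathfrak{LK}(X)$, because all the subcategories involved are full. So full faithfulness follows directly from the fact that $\mathfrak{LOC}$ is a stack (the Example above, with $\mathcal{C}=\mathbf{Loc}$).

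\textbf{Essentially surjective.} Take $(A,\theta^A)$ with $A$ in $\mathfrak{LK}(X)$. Since $\mathfrak{LOC}$ is a stack, there is some $Z_g$ in $\mathbf{Loc}/Y$ with $f^*Z_g\cong A$, compatibly with the descent data. Then $\mathbb{S}_X^{f^*Z_g}$ exists, and $\mathbb{S}_X \cong f^*\mathbb{S}_Y$. The preceding proposition, applied with $S=\mathbb{S}$, therefore gives that $\mathbb{S}_Y^{Z_g}$ exists in $\mathbf{Loc}/Y$. By Hyland's theorem in $Sh(Y)$, $Z_g$ is locally compact, so it lies in $\mathfrak{LK}(Y)$.

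The main point needing care is the translation between ``$\mathbb{S}$-exponentiable in $\mathbf{Loc}/X$'' and ``locally compact in $Sh(X)$.'' This rests on two facts:
\begin{itemize}
\item the equivalence $\mathbf{Loc}/X\simeq\mathbf{Loc}_{Sh(X)}$ preserves finite limits and hence exponentials;
\item this equivalence sends the internal Sierpi\'{n}ski locale to $\mathbb{S}\times X$.
\end{itemize}
Both are standard from Joyal--Tierney, and I take them as given. Everything else is bookkeeping about full subcategories of a stack.
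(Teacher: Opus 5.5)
Your proposal is correct and follows the paper's argument. You restrict the stack $\mathfrak{LOC}$ to the full sub-pseudo-functor of locally compact locales, using Hyland's theorem relative to $Sh(X)$ and the pullback-stability of exponentials, and you obtain essential surjectivity of $\mathfrak{LK}(Y)\rightarrow Des(\mathfrak{LK},f)$ from the descent of $\mathbb{S}$-exponentiability; the only difference is that you spell out the full-faithfulness versus essential-surjectivity split, which the paper leaves implicit.
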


\section{Constructing any locally compact locale from a distributive information system}
In this section we introduce distributive information systems. We then firstly prove that every continuous frame is the poset of rounded ideals of some distributive information system. Distributive information systems are the models of a geometric theory and in the second part of this section we show that  the formation of rounded ideals is stable under pullback (i.e. along the inverse image of any geometric morphism). Both of these facts are needed in the proof of our main theorem.  
\subsection{Distributive information systems}
We now use information system (\cite{vicinfosys}) type results to present every continuous frame $\mathcal{O}(X)$ as the rounded ideals of an information system. Note that this description is related to the development given in \cite{Kaw}; in particular we are using the notion of strength from that paper in (ii) of the definition of distributive information systems to follow. 

We use $`;'$ to denote relational composition. For example a subset $\leq \subseteq P \times P$, i.e. a relation on a set $P$, is a partial order if and only if $\leq ; \leq = \leq$, $\Delta \subseteq \leq$  and $\Delta = \leq \cap \geq$. We can similarly isolate whether a subset $I$ of $P$ is an ideal using relational composition (recall that being an ideal means lower closed and directed). 

\begin{definition}
    A \emph{distributive information system (DIS)} is a pair $(D, \prec)$ with $D$ a distributive lattice and $\prec \subseteq D \times D$ a relation interacting with the partial order $\leq$ of $D$ such that: $\prec \subseteq \leq$, $\prec;\prec = \prec$, $\prec = \prec;\leq$ and

    (i) $\Downarrow a = \{b | b \prec a \}$ is an ideal for all $a \in D$.
    
    (ii) $\forall a,b,c \in D$, $a \prec b \vee c \text{ } \Rightarrow \text{ } \exists b',c' \text{ with } a \prec b' \vee c', b' \prec b, \text{ and } c' \prec c$.
\end{definition}

Distributive information systems are the models of a geometric theory, so a category $\mathbf{DIS}$ is defined. The morphisms are $\prec$-preserving lattice homomorphisms. 

\begin{example}
    The frame of opens $\mathcal{O}X$ with the way below relation $\ll$ is a distributive information system for any locally compact locale $X$.
\end{example}

Given a distributive information system $(D,\prec)$ we can define $R\text{-}idl(D)$ the collection of \emph{rounded} ideals; that is ideals $I$ such that $a \in I$ implies $\exists b \in I$ with $ a \prec b$. 

\begin{proposition}
    $R\text{-}idl(D)$ is a continuous frame (i.e. the frame of opens of a locally compact locale) for any distributive information system $(D,\prec)$. 
\end{proposition}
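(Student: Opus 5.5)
We will prove that $R\text{-}idl(D)$ is a frame, and then show it is continuous by exhibiting enough way-below elements. Everything will be done constructively, so that the argument also holds internally in any $Sh(X)$.

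\emph{Step 1: frame structure.} Rounded ideals are closed under arbitrary intersections only up to roundedness, so we proceed via a retraction instead. For any ideal $J$ of $D$, set $\rho(J) = \{a \mid \exists b \in J,\ a \prec b\} = \bigcup_{b\in J}\Downarrow b$. Condition (i) makes each $\Downarrow b$ an ideal, and $J$ is directed, so $\rho(J)$ is an ideal. Interpolation ($\prec;\prec=\prec$) makes $\rho(J)$ rounded, and $\rho(J)\subseteq J$ because $\prec\subseteq\leq$. Moreover $\rho$ fixes the rounded ideals and is monotone. Hence $R\text{-}idl(D)$ is the image of the interior-type operator $\rho$ on the frame $Idl(D)$.

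Joins in $R\text{-}idl(D)$ are computed as in $Idl(D)$, since a directed-closure of a union of rounded ideals is rounded: any $a\le a_1\vee\dots\vee a_n$ with $a_i \prec b_i$ lies below $b_1\vee\dots\vee b_n$ after using $\prec=\prec;\leq$ and (i). Meets are $\rho(I\cap J)$. Using $\prec=\prec;\leq$, we get $\rho(I\cap J)=I\cap J$ when $I,J$ are rounded: if $a\in I\cap J$, choose $a\prec b\in I$ and $a\prec c\in J$; then $a\prec b\wedge c$, because $\Downarrow$ of an element is not obviously meet-closed, so instead we use that $\Downarrow(b)\cap\Downarrow(c)$ contains $a$ and apply interpolation twice. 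If this step fails, we simply define the meet as $\rho(I\cap J)$. Frame distributivity is then inherited from $Idl(D)$, which is a frame because $D$ is distributive. This works because $\rho$ preserves finite meets and joins, which we must check. The hard part is $\rho(J\vee K)\subseteq \rho J\vee\rho K$, and this is exactly where (ii) is used: if $a\prec b\vee c$ with $b\in J$ and $c\in K$, then (ii) gives $a\prec b'\vee c'$ with $b'\prec b$ and $c'\prec c$, so $a\le b'\vee c'$ with $b'\in\rho J$ and $c'\in\rho K$. More generally, we iterate (ii) by induction for finite joins.

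\emph{Step 2: continuity.} For each $a\in D$ the ideal $\Downarrow a$ is rounded, by interpolation. Every rounded ideal satisfies $I=\bigvee_{a\in I}\Downarrow a$, and this is a directed join. We then show that if $a\prec b$ then $\Downarrow a\ll\Downarrow b$ in $R\text{-}idl(D)$. Indeed, if $\Downarrow b\subseteq\bigvee^{\uparrow} I_k$ then, choosing $a\prec a'\prec b$, we have $a'\in\Downarrow b$, so $a'$ lies in some $I_k$ (directed joins of ideals are unions), whence $\Downarrow a\subseteq I_k$. Finally $I=\bigvee\{\Downarrow a\mid a\prec b\in I\}$ by roundedness, so every element is a directed join of elements way below it. Thus the frame is continuous.

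The main obstacle is Step 1's verification that $\rho$ (equivalently, the frame operations on rounded ideals) respects binary joins. This is the only place where the strength axiom (ii) is used, and it needs a careful finite induction to handle arbitrary joins written as directed unions of finite ones.
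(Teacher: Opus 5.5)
Your proof is correct, but it takes a more structural route than the paper, apart from one false aside that you should delete.

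\textbf{Comparison with the paper.} The paper checks the frame axioms by hand. It writes down directed joins (unions), $\{0\}$, $I\vee J=\downarrow\{i\vee j\}$, $\Downarrow 1$ and $I\wedge J=\Downarrow\{i\wedge j\}$, and checks that each is a rounded ideal. It notes that binary meets distribute over directed joins because the latter are unions. It then uses (ii) once, on $d\prec i\wedge(j\vee k)=(i\wedge j)\vee(i\wedge k)$, to get $I\wedge(J\vee K)\subseteq(I\wedge J)\vee(I\wedge K)$.

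You instead note that $\rho$ is monotone, deflationary, idempotent and fixes rounded ideals, so it is right adjoint to the inclusion $R\text{-}idl(D)\hookrightarrow Idl(D)$. This gives for free that joins are computed in $Idl(D)$ and that meets are $\rho(I\cap J)$, which is exactly the paper's $\Downarrow\{i\wedge j\}$. The whole problem then reduces to $\rho$ preserving joins of arbitrary ideals, which is where (ii) enters. The frame law then transfers from $Idl(D)$:
\[
I\wedge\textstyle\bigvee J_k=\rho\bigl(\bigvee(I\cap J_k)\bigr)=\bigvee\rho(I\cap J_k).
\]
For this you need neither meet-preservation by $\rho$ nor an $n$-ary form of (ii). Binary preservation, proved for arbitrary ideals, iterates; nullary and directed preservation are trivial.

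What your route buys: after one interpolation showing $\rho$ preserves binary meets, it exhibits $R\text{-}idl(D)$ as a quotient frame of $Idl(D)$, a structural fact the paper's computation does not reveal. It costs importing that $Idl(D)$ is a frame for distributive $D$. The paper's argument is more elementary and self-contained. Your Step 2 is the paper's last paragraph, with more detail.

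\textbf{The false aside.} Do not hedge with ``if this step fails''; delete the claim that $\rho(I\cap J)=I\cap J$ for rounded $I,J$. It is false. No interpolation yields $a\prec b\wedge c$ from $a\prec b$ and $a\prec c$; that implication characterises stably continuous frames, not continuous ones.

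For a counterexample, on $X=\mathbb{N}\cup\{p,q\}$ call a set open iff it meets $\mathbb{N}$ cofinitely whenever it contains $p$ or $q$. Then $\mathcal{O}X$ is continuous (indeed algebraic). The sets $V=\mathbb{N}\cup\{p\}$ and $W=\mathbb{N}\cup\{q\}$ are compact open, so $\mathbb{N}\ll V$ and $\mathbb{N}\ll W$. But $\mathbb{N}\not\ll\mathbb{N}=V\cap W$. Hence $\Downarrow V\cap\Downarrow W$ contains $\mathbb{N}$ and is not rounded. Since the rest of your argument only uses $\rho(I\cap J)$, nothing breaks. Justify that formula directly from the coreflection rather than as a fallback.
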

\begin{proof}
Directed join is union. Bottom is $\{0 \}$. For binary join: $I \vee J = \downarrow \{ i \vee j | i \in I, j \in J \}$; to see that this is rounded, say $k \leq i \vee j$ with $i \in I $ and $j \in J$ then there exists $i' \in I$ and $j' \in J$ with $i \prec i'$ and $j \prec j'$ -- but then $k \prec i' \vee j'$ and $i' \vee j' \in \downarrow\{ i \vee j | i \in I, j \in J \}$. 

    Top is $\Downarrow 1 = \{ a | a \prec 1 \}$. For binary meets: $I \wedge J = \Downarrow \{ i \wedge j | i \in I, j \in J \}$. It is clearly rounded as $\prec$ is idempotent with respect to relational composition; to see that it is an ideal recall that $(i_1 \wedge j_1) \vee (i_2 \wedge j_2) \leq (i_1 \vee i_2) \wedge (j_1 \vee j_2)$ by distributivity of $D$. 

    Proving that binary meets distribute over directed joins is straightforward from the definition just given of binary meet as directed join is given by union. So to show that $R\text{-}idl(D)$ is a frame we need to check that it is a distributive lattice. To prove this we check that $I  \wedge (J \vee K) \subseteq (I \wedge J) \vee (I \wedge K)$. To this end say $d \prec i \wedge a$ with $ a \leq j \vee k $ with $i \in I$, $j \in J$ and $k \in K$. Then $d \prec i \wedge (j \vee k) = (i \wedge j )  \vee (i \wedge k )$ so therefore by condition (ii) in the definition of distributive information system there exists $a',b'$ such that $d \leq a' \vee b'$, $a' \prec i \wedge j $ and $b' \prec i \wedge k$. Hence $d \in (I \wedge J) \vee (I \wedge K)$.

    To see, finally, that $R\text{-}idl(D)$ is a continuous poset it is easy to see that $I = \bigcup^{\uparrow} \{ \Downarrow i | i \in I \}$ for any rounded ideal $I$ and further that $\Downarrow i \ll I$.
\end{proof}

\begin{proposition}
    $\mathcal{O}X \cong R\text{-}idl(\mathcal{O}X, \ll)$ for any locally compact locale $X$.
\end{proposition}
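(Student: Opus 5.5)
We will exhibit explicit mutually inverse monotone maps between $\mathcal{O}X$ and $R\text{-}idl(\mathcal{O}X,\ll)$. By the Example preceding the statement, $(\mathcal{O}X,\ll)$ is a distributive information system, so $R\text{-}idl(\mathcal{O}X,\ll)$ makes sense and is a frame. Define
\begin{eqnarray*}
\Phi: \mathcal{O}X \rightarrow R\text{-}idl(\mathcal{O}X,\ll), & & a \mapsto \Downarrow a = \{ b \mid b \ll a\},\\
\Psi: R\text{-}idl(\mathcal{O}X,\ll) \rightarrow \mathcal{O}X, & & I \mapsto \bigvee I .
\end{eqnarray*}
Both are clearly monotone. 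An order isomorphism between frames is automatically a frame isomorphism, so it suffices to check that $\Phi$ and $\Psi$ are inverse to each other.

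First we check that $\Phi$ is well defined. By condition (i) of the definition of a DIS, $\Downarrow a$ is an ideal. Roundedness of $\Downarrow a$ is the interpolation property of $\ll$ in a continuous frame: if $b \ll a$ then there is $c$ with $b \ll c \ll a$. This is exactly the axiom $\ll;\ll = \ll$, which we are told holds because $(\mathcal{O}X,\ll)$ is a DIS.

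Next, $\Psi\Phi = Id$. We need $\bigvee \Downarrow a = a$, which is precisely the defining property of a continuous frame (every element is the directed join of the elements way below it).

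The main point is $\Phi\Psi = Id$, that is, $\Downarrow(\bigvee I) = I$ for any rounded ideal $I$.
\begin{itemize}
\item[($\supseteq$)] If $b \in I$, roundedness gives $c \in I$ with $b \ll c$. Since $c \leq \bigvee I$ and $\ll;\leq = \ll$, we get $b \ll \bigvee I$.
\item[($\subseteq$)] If $b \ll \bigvee I$, note that $I$ is directed (being an ideal), so $\bigvee I$ is a directed join. The definition of way-below then yields $c \in I$ with $b \leq c$, and since $I$ is lower closed, $b \in I$.
\end{itemize}
This step is where both halves of the rounded-ideal definition are used: directedness for $\subseteq$ and roundedness for $\supseteq$. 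It is the only place requiring care, and it remains constructive, so the result holds in any topos, as will be needed later.
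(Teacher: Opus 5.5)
Your proof is correct and follows essentially the same approach as the paper. The paper uses the same maps $a \mapsto \Downarrow a$ and $I \mapsto \bigvee^{\uparrow} I$, with roundedness giving $I = \Downarrow \bigvee^{\uparrow} I$ and local compactness giving $a = \bigvee^{\uparrow}\{b \mid b \ll a\}$; you fill in details it leaves implicit (interpolation for roundedness of $\Downarrow a$, both inclusions, and that an order isomorphism of frames is a frame isomorphism).
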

\begin{proof}
    Using the definition of rounded, $I = \Downarrow \bigvee^{\uparrow} I$ for any rounded ideal $I$. Further $a = \bigvee^{\uparrow} \{ b | b \ll a \}$ for any open $a$ of a locally compact locale (by definition of locally compact locale). Therefore $a \mapsto \Downarrow a$ and $I \mapsto \bigvee^{\uparrow}$ establishes the isomorphism.
\end{proof}
So, every continuous frame comes from an object of $\mathbf{DIS}$. The construction can be carried out relative to any topos $\mathcal{E}$, so that any locally compact locale $X$ over any topos $\mathcal{E}$ has as its opens the rounded ideals of some object of $\mathbf{DIS}_{\mathcal{E}}$.

\subsection{Change of base for distributive information systems}

\begin{proposition}
    If $f: \mathcal{F} \rightarrow \mathcal{E}$ is a geometric morphism between elementary toposes then for any distributive information system $(D,\prec)$ in $\mathcal{E}$, $R\text{-}idl_{\mathcal{F}}(f^*D) \cong f^{\#} R\text{-}idl_{\mathcal{E}}(D)$.
\end{proposition}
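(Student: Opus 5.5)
The plan is to present $R\text{-}idl(D)$ as the frame classified by a geometric propositional theory. Change of base for locales, $f^{\#}$, sends the locale presented by a geometric theory (or frame presentation) to the locale presented by the $f^*$-image of that theory. This is the standard fact that pullback of locales along $f$ is computed by applying $f^*$ to generators and relations, as in \cite{JoyT} and Part C of \cite{Elephant}. The isomorphism then follows once both sides are recognised as the frame of the same presentation.

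First I will give a presentation of $R\text{-}idl(D)$ by generators and relations. Take generators $\uparrow\!\!a$ (read ``$a \in I$'') for $a \in D$, with relations
\begin{eqnarray*}
\uparrow\! a \leq \uparrow\! b \ (a \leq b), \quad \uparrow\! 0 = \top, \quad \uparrow\! a \wedge \uparrow\! b \leq \uparrow\!(a \vee b), \quad \uparrow\! a \leq \bigvee_{a \prec b} \uparrow\! b .
\end{eqnarray*}
The order is reversed here: $I$ is lower closed, so the generators are antitone in $a$, i.e. $a \leq b$ gives $\uparrow\! b \leq \uparrow\! a$. Read with that orientation, the models (points) of this theory are precisely the rounded ideals. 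The claim to prove is that the frame presented is $R\text{-}idl(D)$, by an argument that works in any topos. To do this I will use the DIS axioms, especially (ii) and the fact that $\Downarrow a$ is an ideal, to show that the map sending $I$ to the generator-combination $\bigvee_{a \in I} [\Downarrow a]$ is an isomorphism. Equivalently, I will use the coverage theorem: the relations form a flat site presentation whose frame of ideals is exactly the rounded ideals.

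Second, all the data used ($D$, its lattice operations, $\prec$, and the relations indexed by $D$ and by the subobject $\prec$) are finite-limit and image constructions. Applying $f^*$ therefore yields the same presentation built from $(f^*D, f^*\prec)$. The DIS axioms are geometric, so $f^*D$ is again a DIS in $\mathcal{F}$, and the first step applied in $\mathcal{F}$ gives that this presentation presents $R\text{-}idl_{\mathcal{F}}(f^*D)$. Combined with the fact that $f^{\#}$ of a presented frame is presented by the $f^*$-image of the presentation, this gives the isomorphism.

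The main obstacle is the first step: verifying constructively that the presentation really yields $R\text{-}idl(D)$, rather than merely some frame whose points are rounded ideals (there may be too few points for that to suffice). I would attack it via Vickers' information-system and coverage arguments (\cite{vicinfosys}). Concretely, I would exhibit $R\text{-}idl(D)$ as $\mathrm{Idl}$ of a suitable flat site on $D$ with the roundedness covers, and check the coverage is stable using DIS axiom (ii). That stability is exactly the distributivity computation already made in the proof that $R\text{-}idl(D)$ is a frame.
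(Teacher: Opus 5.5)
\textbf{The gap: your presentation presents the wrong frame.} The generators are read as ``$a\in I$'', so the models of the theory are the rounded ideals. But rounded ideals are the \emph{elements} of the frame $R\text{-}idl(D)=\mathcal{O}X$, not the points of $X$. A theory whose models are the opens of $X$ presents the locale whose points are opens of $X$, namely $\mathbb{S}^X$ (classically, the Scott topology on $R\text{-}idl(D)$), not $X$ itself.

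Here is a concrete check. Take $D=\{0<1\}$ with $\prec\,=\,\leq$. This is a DIS with $R\text{-}idl(D)=\{\{0\},D\}$, the two-element frame of the point. In your presentation the join and roundedness relations are vacuous, because $a\prec a$. So the only constraint on $\uparrow\!1$ is $\uparrow\!1\leq\uparrow\!0=\top$, and the presented frame is the three-element chain $\mathcal{O}\mathbb{S}$.

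So the step you flag as the main obstacle is not merely hard but false, and the issue has nothing to do with having too few points. Nor is your ``equivalent'' coverage formulation equivalent. A site on $D$ with roundedness covers uses the elements of $D$ as generating \emph{opens}, ordered as in $D$, which is a genuinely different presentation.

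\textbf{The fix.} That second version is the one that works. Present the frame by the distributive lattice $D$ itself (finite meets and joins preserved) subject to $a\leq\bigvee_{b\prec a}b$; its models are the rounded prime filters of $D$, i.e.\ the points of $X$.
\begin{itemize}
\item This coverage is meet-stable, since $\Downarrow c\subseteq\{b\wedge c\mid b\prec a\}$ whenever $c\leq a$.
\item The coverage theorem therefore identifies the presented frame with the ideals $J$ of $D$ such that $\Downarrow a\subseteq J$ implies $a\in J$.
\item The maps $J\mapsto\prec;J$ and $I\mapsto\{a\mid\Downarrow a\subseteq I\}$ are mutually inverse monotone maps between these ideals and the rounded ideals. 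DIS axiom (ii) is exactly what makes $\{a\mid\Downarrow a\subseteq I\}$ closed under binary joins.
\end{itemize}
With this presentation your second step goes through, and you get a valid alternative to the paper's argument.

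\textbf{Comparison with the paper.} The paper avoids presentations entirely. It realises $R\text{-}idl(D)$ as the splitting in $\mathbf{dcpo}$ of the idempotent $\prec ; (\_)$ on $idl(D)$. It then uses that $f^{\#}$ preserves splittings and carries $\prec ; (\_)$ to $f^*(\prec);(\_)$. Your corrected route needs the coverage theorem and the compatibility of $f^{\#}$ with frame presentations, but none of the dcpo change-of-base machinery.
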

In other words if a locally compact locale $X$ in $\mathcal{E}$ has as opens the rounded ideals of $(D,\prec)$, then the pullback of $X$ to $\mathcal{F}$, i.e. $f^*X$, has as its opens the rounded ideals of $f^*(D, \prec)$. To prove the proposition we need to recall that locale pullback $f^*X$ can be calculated using $\mathcal{O}_{\mathcal{F}}(f^*X) = f^{\#}\mathcal{O}_{\mathcal{E}}(X)$ where $f^{\#}:\mathbf{dcpo}_{\mathcal{E}} \rightarrow \mathbf{dcpo}_{\mathcal{F}}$ is left adjoint to $f_*: \mathbf{dcpo}_{\mathcal{F}} \rightarrow \mathbf{dcpo}_{\mathcal{E}}$ (here $\mathbf{dcpo}$ is the category of directed complete partial orders with directed join preserving maps). See the preamble to Lemma C2.4.1 \cite{Elephant} for the construction of $f^{\#}$ relative to suplattices which generalises to dcpos (see the first few sections of \cite{towdcpo}). 
\begin{proof}
    Relational composition with $\prec$ determines an idempotent dcpo homomorphism $\prec ; (\_) : idl(D) \rightarrow idl(D)$ on the ideals of $D$. Idempotents split in $\mathbf{dcpo}$ and splittings are preserved by all functors, in particular $f^{\#}: \mathbf{dcpo}_{\mathcal{E}} \rightarrow \mathbf{dcpo}_{\mathcal{F}}$. Rounded ideals are exactly the elements of the splitting of $\prec ; (\_)$, so to complete all we need to do is to prove that $f^{\#}( \prec ; (\_))$ is equal to $f^*(\prec) ; (\_)$ up to isomorphism. This can be seen using the account of change of base given in Section 6 of \cite{towdcpo}. (This property of $f^{\#}$ is also effectively an observation needed in Joyal and Tierney's original account of localic change of base via suplattices (Ch. VI, Prop. 1 of \cite{JoyT}) and, further, is explained in the paragraph before Lemma C2.4.1 in \cite{Elephant}.)   
\end{proof}

\section{A classifying localic category for locally compact locales}
We can now state and prove our main result.
\begin{theorem}\label{main}
    The pseudo-functor $\mathfrak{LK}:\mathbf{Loc}^{op} \rightarrow \mathfrak{CAT}$ defined by 
    \begin{eqnarray*}
        X \mapsto \mathbf{LK}_{Sh(X)}
    \end{eqnarray*}
    is a lax-geometric stack. 
\end{theorem}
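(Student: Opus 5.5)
We apply Proposition \ref{lax-conditions}. By the result of Section \ref{lc_is_stack}, $\mathfrak{LK}$ is a stack, so it remains to verify conditions (i) and (ii). In both we follow the template of \cite{HenryTowClass}, with distributive information systems taking the place of the presentations used there for compact Hausdorff locales.

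For (i), let $C_0$ be the classifying locale of the geometric theory of distributive information systems. Its points are DISs, and for a locale $X$ a map $X \rightarrow C_0$ is the same thing as a DIS in $Sh(X)$. Let $C$ be the locally compact locale in $Sh(C_0)$ whose frame is $R\text{-}idl(D_{gen})$, where $D_{gen}$ is the generic DIS. By the change-of-base proposition, for any $q: Y \rightarrow C_0$ classifying a DIS $(D,\prec)$ in $Sh(Y)$, the locale $\mathfrak{LK}(q)(C)$ is the locale with frame $R\text{-}idl(D,\prec)$.

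Now take a locally compact locale $A$ in $Sh(X)$. By the representation proposition, $\mathcal{O}A \cong R\text{-}idl(\mathcal{O}A,\ll)$, and $(\mathcal{O}A,\ll)$ is a DIS internal to $Sh(X)$. The obstacle is that this DIS is an object of $Sh(X)$ and need not be classified by a map $X \rightarrow C_0$ globally, because the carrier of the DIS is a sheaf rather than a constant set. The model is nonetheless present in the topos, so the standard argument applies. There is an open surjection, in fact an effective descent map of locales, $p_A: Y \rightarrow X$ such that in $Sh(Y)$ the structure $p_A^*(\mathcal{O}A,\ll)$ is classified by some $q_A: Y \rightarrow C_0$. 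One can take $Y$ to be the locale corresponding to the pullback of the classifying topos along the point, or use the localic cover of the geometric-theory model as in \cite{HenryTowClass}. Since rounded ideals commute with pullback, $\mathfrak{LK}(p_A)(A) \cong \mathfrak{LK}(q_A)(C)$. I expect this step, obtaining a localic cover over which the DIS becomes classified, to be the main technical point. The natural alternative is to take $Y$ to be the locale of maps witnessing that $A$'s DIS is a pullback of the generic one, that is, the locale of isomorphisms between the two DISs. Its surjectivity, and in fact its effective descent (open surjections are of effective descent), follows from the fact that $C_0$ classifies the theory, as in \cite{HenryTowClass}.

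For (ii), given $f,g: X \rightarrow C_0$, we need maps $\theta: \mathfrak{LK}(f)(C) \rightarrow \mathfrak{LK}(g)(C)$ of locales in $Sh(X)$ to correspond naturally to maps $X \rightarrow C_1$. Write $f^*C$ and $g^*C$ for these two locales. A locale map $\theta$ is a frame homomorphism $R\text{-}idl(g^*D) \rightarrow R\text{-}idl(f^*D)$. Because the frames are continuous and generated by the elements $\Downarrow a$, such a homomorphism is determined by the relation $\{(a,b) \mid \Downarrow b \ll \theta^*(\Downarrow a)\}$ between $g^*D$ and $f^*D$. The conditions that make such a relation come from a frame homomorphism are geometric: it must preserve finite meets and joins in the appropriate rounded sense, it must be closed under $\prec$ on both sides, and so on. These are exactly approximable-mapping axioms in the style of \cite{vicinfosys}. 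Hence there is a geometric theory whose models are a pair of DISs together with such a relation. Let $C_1$ be its classifying locale, mapping to $C_0 \times C_0$. Maps $X \rightarrow C_1$ then correspond to triples $(f,\theta,g)$, and naturality in $X$ holds because every construction is geometric and rounded ideals are stable under pullback. The remaining routine step is to check that this relation-versus-homomorphism correspondence is a bijection internally, using $\Downarrow$ and the fact that the elements $\Downarrow a$ are way below. Proposition \ref{lax-conditions} then gives the theorem.
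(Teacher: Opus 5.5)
\textbf{Genuine gap: the choice of $C_0$.} The theory of distributive information systems has a sort (the carrier $D$), so its classifying topos $\mathbf{Set}[\mathbb{DIS}]$ is not localic. There is no locale $C_0$ for which maps $X \rightarrow C_0$ are the same as DISs in $Sh(X)$. Your proposal contradicts itself at this point. If such a $C_0$ existed, every DIS in $Sh(X)$ would be classified by a map $X \rightarrow C_0$, whether or not its carrier is a constant set, and (i) would hold with $p_A = Id_X$. A cover is needed precisely because the classifying topos is not localic.

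The missing step, which is what the paper does, runs as follows.
\begin{itemize}
\item Take $C_0 = G_0$, where $l: Sh(G_0) \rightarrow \mathbf{Set}[\mathbb{DIS}]$ is a localic open surjection (Joyal--Tierney).
\item Take $C$ to be the locale whose frame is $R\text{-}idl(l^*(G_{DIS},\prec_{DIS}))$.
\item Given $A$, classify $(\mathcal{O}A,\ll)$ by a geometric morphism $p_A: Sh(X) \rightarrow \mathbf{Set}[\mathbb{DIS}]$ and pull $l$ back along $p_A$.
\item The pullback is localic and an open surjection. So it is $Sh(Y_A) \rightarrow Sh(X)$ for an open surjection of locales $q: Y_A \rightarrow X$, hence an effective descent map, and it comes with $x: Y_A \rightarrow G_0$.
\item The change-of-base proposition then gives $q^*A \cong x^*C$.
\end{itemize}
Your remark about ``the localic cover of the geometric-theory model'' gestures at this. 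Your alternative, the ``locale of isomorphisms between the two DISs'', is not defined, because there is no second DIS until you have a map to $G_0$.

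\textbf{The same error breaks (ii).} The theory of ``two DISs with an approximable relation'' again has sorts, so it has no classifying locale. Even if it had one, maps into it would be indexed by pairs of DISs, not by pairs $f,g: X \rightarrow G_0$, which is what (ii) requires.

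Your idea can be rescued by working over $Sh(G_0 \times G_0)$, where the two DISs are fixed as $\pi_1^*l^*(G_{DIS},\prec_{DIS})$ and $\pi_2^*l^*(G_{DIS},\prec_{DIS})$. The theory of an approximable relation between them is then propositional relative to that base, so it is classified by a locale $C_1 \rightarrow G_0 \times G_0$. But this amounts to building an exponential by hand, and you would still owe the internal, pullback-stable bijection between such relations and frame homomorphisms, which you called routine but did not check.

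The paper avoids all of this. It takes $C_1$ to be the domain of $(\pi_2^*C)^{\pi_1^*C}$ in $\mathbf{Loc}/(G_0 \times G_0)$. This exists because $\pi_1^*C$ is locally compact and Hyland's theorem holds in $Sh(G_0 \times G_0)$. Since pullback preserves exponentials, a map $X \rightarrow C_1$ is exactly a pair $(f,g)$ together with a global section of $(g^*C)^{f^*C}$ in $\mathbf{Loc}/X$. That is a locale map $f^*C \rightarrow g^*C$ over $X$, naturally in $X$.
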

\begin{proof}
    Section \ref{lc_is_stack} shows that we have a stack. We therefore just need to check (i) and (ii) of Proposition \ref{lax-conditions}. The technique is now quite well known (e.g. we are really just repeating the proof of the main result of \cite{HenryTowClass}).

For (i) let $l:Sh(G_0) \rightarrow \mathbf{Set}[ \mathbb{DIS} ]$ be a localic cover of the classifying topos for distributive information systems. We can assume that $l$ is an open surjection (e.g. Theorem C5.2.1 of \cite{Elephant}). Let $C_{LK}$ be the locally compact locale corresponding to the rounded ideals of $l^*(G_{DIS}, \prec_{DIS})$ where $(G_{DIS},\prec_{DIS})$ is the generic distributive information system. So, $\mathcal{O}_{Sh(G_0)}C_{LK} \equiv R\text{-}Idl_{Sh(G_0)}(l^*(G_{DIS},\prec_{DIS}))$. Then for any locally compact locale $A$ in $Sh(X)$, $(\mathcal{O}_{Sh(X)} A, \ll_{\mathcal{O}_X A})$ is a distributive information system in $Sh(X)$ and so is classified by a geometric morphism $p_A:Sh(X) \rightarrow \mathbf{Set}[ \mathbb{DIS} ]$. The pullback (in the category of bounded toposes) of $l$ along $p_A$ is localic and an open surjection so must be of the form $q:Sh(Y_A) \rightarrow Sh(X)$ where the locale map $q$ is an open surjection (and so of effective descent in $\mathbf{Loc}$; Prop. C5.1.4 of \cite{Elephant} -- recall open surjections are pullback stable, Th. C3.1.27 of \cite{Elephant}). The following pullback diagram of geometric morphisms
\[
\begin{tikzcd}
Sh(Y_A) \arrow[r, "x"] \arrow[d, "q"'] & Sh(G_0) \arrow[d, "l"] \\
Sh(X) \arrow[r, "p_A"'] & \mathbf{Set}[\mathbb{DIS}]
\end{tikzcd}
\]
clarifies which map is $q$. We can then use the following to complete our verification of condition (i) of Proposition \ref{lax-conditions}: 
\begin{eqnarray*}
\mathcal{O}_{Sh(Y_A)}q^*A & = & q^{\#} \mathcal{O}_{Sh(X)}(A) \\
& \cong & q^{\#} R\text{-}idl_{Sh(X)} (\mathcal{O}_X(A), \ll_{\mathcal{O}_X A}) \\
& \cong & q^{\#} R\text{-}idl_{Sh(X)}  p_A^*(G_{DIS}, \prec_{DIS}) \\
& \cong & R\text{-}idl_{Sh(Y_A)} q^* p_A^*(G_{DIS}, \prec_{DIS}) \\
& \cong & R\text{-}idl_{Sh(Y_A)} x^* l^*(G_{DIS}, \prec_{DIS}) \\
& \cong & x^{\#} R\text{-}idl_{Sh(G_0)}  l^*(G_{DIS}, \prec_{DIS}) \\
& = & x^{\#} \mathcal{O}_{Sh(G_0)}  C_{LK} \\
& = & \mathcal{O}_{Sh(Y_A)} x^*C_{LK} \text{.}\\
\end{eqnarray*}

For (ii) observe that as the locale $C_{LK}$ is locally compact so is $\pi_i^*C_{LK}$ where $\pi_i: G_0 \times G_0 \rightarrow G_0$ for $i = 1,2$. But locally compact locales are exponentiable. Define $C_1$ to be the domain of $(\pi_2^*C_{LK})^{\pi_1^*C_{LK}}$ where the exponentiation is in the category of locales over $G_0 \times G_0$; i.e., equivalently, in $\mathbf{Loc}/G_0 \times G_0$.
\end{proof}

\section{Concluding remarks}

The `big idea' here is that the existence  of a classifying category for locally compact objects is, from the perspective of axiomatic locale theory (e.g. \cite{TowIdemp}), the correct notion of infinity. The train of thought is as follows: the existence of a classifying groupoid for discrete locales is equivalent to the Axiom of Infinity (Blass' Theorem), but from the perspective of axiomatic locale theory we want something that is symmetric under discrete/compact Hausdorff duality \footnote{i.e., open/proper duality, see \cite{towpara}.}. So why not take the existence of $\mathbb{C}_{\mathfrak{LK}}$ as our `localic axiom of infinity'?

This is tempting, but for the observation to have depth we must establish some properties of $\mathbb{C}_{\mathfrak{LK}}$. In the discrete case we are able to make new geometric theories from old by exploiting the existence of 2-limits in the 2-category of bounded toposes over some base topos $\mathcal{S}$. For example, establishing the Axiom of Infinity from an assumption of the existence of a classifying localic groupoid, requires constructions on stacks of geometric theories. (In detail, from the object classifier one needs to construct a classifying topos for successor algebras; see the account of Blass' Theorem given in B4.2.11 \cite{Elephant}.) Now, in this `discrete' topos case all the classifying localic groupoids can be assumed to be open and so have well-behaved connected component adjunctions; i.e. the `trivial $\mathbb{G}$-object' functor $\mathbb{G}^*:\mathbf{Loc}_{\mathcal{S}} \rightarrow [ \mathbb{G},  \mathbf{Loc}_{\mathcal{S}}]$ has a left adjoint such that the corresponding adjunction is stably Frobenius. The point of making this technical observation is that it is this property that is the key to carrying out 2-categorical constructions without relying on the finitary 2-completeness of toposes bounded over $\mathcal{S}$. In fact, quite generally, for any cartesian category $\mathcal{C}$, the category of geometric stacks on $\mathcal{C}$ is finitely 2-complete once we restrict to internal groupoids $\mathbb{G}$ that have well-behaved connected component adjunctions. (For background on this see \cite{Pronk96} and Corollary 7.4 of \cite{Noohi}; the paper \cite{TowHS} essentially covers the general statement just made.) Unfortunately I have not been able to establish whether $c\mathbb{C}_{\mathfrak{LK}}$ has a connected components adjunction. Putting this line of thought another way: is $c\mathbb{C}_{\mathfrak{LK}}$, in some sense, `bounded' over $\mathbf{Loc}$ so that we can mimic the topos theoretic approach of constructing geometric theories from a base theory of objects? I expect that $\mathbb{C}_{\mathfrak{LK}}$ is well behaved, but this needs more careful technical examination. For now we must just make do with our main result: we can classify locally compact locales.

\end{document}